\g@addto@macro\bfseries{\boldmath}
\theoremstyle{plain}
\newtheorem{theorem}{Theorem}[section]
\newtheorem{proposition}[theorem]{Proposition}
\newtheorem{lemma}[theorem]{Lemma}
\newtheorem{conjecture}[theorem]{Conjecture}
\theoremstyle{remark}
\theoremstyle{definition}
\newtheorem{remark}[theorem]{Remark}
\newtheorem{convention}[theorem]{Convention}
\def\Z{\mathbb{Z}}
\def\Q{\mathbb{Q}}
\def\O{\mathcal{O}} %Order
\def\P{\mathcal{P}} %Pythagoras number
\newcommand{\BQ}[2]{\Q\big(\!\sqrt{#1}, \sqrt{#2}\big)} %\BQ25 is the biquad. field with p=2,q=5. \BQ6}{10} gives p=6, r=10...
\newcommand{\K}[2]{K_{{#1},{#2}}} %\K25 is the biquad. field with m=2,n=5, kompaktni notace
\newcommand{\zeros}{0^{\infty}}
\newcommand{\evens}{\text{even}^{\infty}}
\newcommand{\flfl}[1]{\lfloor\!\lfloor{#1}\rfloor\!\rfloor} %licha dolni cela cast
\newcommand{\cece}[1]{\lceil\!\lceil{#1}\rceil\!\rceil} %licha horni cela cast
\newcommand{\flflsq}[1]{\flfl{\sqrt{#1}}} %licha dolni cela cast odmocniny
\newcommand{\cecesq}[1]{\cece{\sqrt{#1}}} %licha horni cela cast odmocniny
\DeclareMathOperator{\Tr}{Tr}
\begin{document}

\title{On biquadratic fields: when 5 squares are not enough}

\author[D. Dombek]{Daniel Dombek}

\subjclass[2020]{11E25}% (11R04 11R80)}

\keywords{biquadratic number field; ring of integers; sum of squares; Pythagoras number}

\address{D. Dombek\newline
\indent Department of Applied Mathematics\newline
\indent Faculty of Information Technology, CTU in Prague\newline
\indent Thákurova 9\newline
\indent 160 00 Prague 6, Czech Republic}

\email{daniel.dombek@fit.cvut.cz}

%%%%%%%%%%%%%%%%%%%%%%%%%%%%%%%%%%%%%%%%%%%%%%%%%%%%%%%%%%%%%%%%%%%%%%%%%%%%%%%%%%%%%%%%%%%%%%%%%%%%%%%%%%%%%%%%%%%

\begin{abstract} 
  In this paper we study the Pythagoras number $\P(\O_K)$ for the rings of integers in totally real biquadratic fields $K$. We continue the work of Tinková towards proving the conjecture by Krásenský, Raška and Sgallová that a biquadratic $K$ satisfies $\P(\O_K)\geq 6$ if and only if it contains neither $\sqrt{2}$ nor $\sqrt{5}$, with only finitely many exceptions. We fully solve two out of three remaining classes of fields by proving that all but finitely many $K$ containing $\sqrt{6}$ or $\sqrt{7}$ satisfy $\P(\O_K)\geq 6$. Furthermore, we present ideas and computations which further support the conjecture also for $K$ containing $\sqrt{3}$. This enables us to refine the conjecture by explicitly listing the exceptional fields.
\end{abstract}

\maketitle

%%%%%%%%%%%%%%%%%%%%%%%%%%%%%%%%%%%%%%%%%%%%%%%%%%%%%%%%%%%%%%%%%%%%%%%%%%%%%%%%%%%%%%%%%%%%%%%%%%%%%%%%%%%%%%%%%%%

\section{Introduction}

  In this paper, we study the so-called \emph{Pythagoras number} of rings of integers in totally real biquadratic fields, denoted by $\P(\O_K)$. We follow the works of~\cite{KraRasSga22,Tin25,HeHu22} in an effort to classify all biquadratic fields according to their $\P(\O_K)$ and we come one step closer to proving the final conjecture from~\cite{KraRasSga22}. The structure of this paper is as follows: The rest of the Introduction deals (very briefly) with prerequisites and gives a summary of the state-of-the-art. Section~\ref{sec:main} presents the main theorems and conjecture. The Theorems~\ref{thm:main1} and \ref{thm:main2} are then proven in Section~\ref{sec:proofs}. In Section~\ref{sec:open}, we discuss the open problems regarding $\P(\O_K)$ for biquadratic fields.

  Given any commutative ring $R$, denote by $\sum R^2$ its subset containing all (finite) sums of squares. For each $\alpha \in \sum R^2$, the minimal number of squares whose sum is equal to $\alpha$ is called its \emph{length} and denoted $\ell(\alpha)$. The \emph{Pythagoras number} is then defined as 
    \[
    \P(R)=\sup\{\ell(\alpha)\,:\,\alpha \in \textstyle\sum R^2\}\,,
    \]
  i.e.~as the smallest number $n$ with the property that any $\alpha\in\sum R^2$ can be written as a sum of at most $n$ squares. In recent years, there has been a development in the study of the Pythagoras numbers of orders in algebraic number fields~\cite{DaaGajSiuYat25,GilTin25,HeHu22,KalSgaTin25,KalYat21,Kra22,KraRasSga22,Tin23,Tin25}. For an overview of the subject, the reader may refer e.g.~to the Introduction in~\cite{KraRasSga22}.

  We study the rings of integers $\O_K$ in \emph{totally real biquadratic fields} $K$. These are the fields $K=\BQ{p}{q}$, where $p,q>1$ are distinct square-free rational integers (note that we often write just ``biquadratic field''). 
    
  In any biquadratic $K$, there exist exactly three distinct square-free rational integers $p,q,r>1$ which are squares in $K$. They satisfy $K=\BQ{p}{q}=\BQ{q}{r}=\BQ{p}{r}$. Moreover, one can be obtained from the other two as their product divided by the square of their $\gcd$, e.g.~$r=\frac{pq}{\gcd(p,q)^2}$. For our purposes, it is crucial to know the integral basis of $\O_K$. It is well known (due to~\cite{Wil70}), that it is of one of the following types (B1--B4), depending on $p,q,r$:

  \begin{tabular}{lll}
    (B1) & $\O_K=\mathrm{Span}_{\Z}\Bigl\{1, \sqrt{p}, \sqrt{q}, \frac{\sqrt{p}+\sqrt{r}}2\Bigr\}$ & if $(p,q) \equiv (2,3) \pmod{4}$,\\
    (B2) & $\O_K=\mathrm{Span}_{\Z}\Bigl\{1, \sqrt{p}, \frac{1+\sqrt{q}}2, \frac{\sqrt{p}+\sqrt{r}}{2} \Bigr\}$ & if $(p,q) \equiv (2,1) \pmod{4}$,\\
    (B3) & $\O_K=\mathrm{Span}_{\Z}\Bigl\{1, \sqrt{p}, \frac{1+\sqrt{q}}2, \frac{\sqrt{p}+\sqrt{r}}{2}\Bigr\}$ & if $(p,q) \equiv (3,1) \pmod{4}$,\\
    (B4a) & $\O_K=\mathrm{Span}_{\Z}\Bigl\{1, \frac{1+\sqrt{p}}2,\frac{1+\sqrt{q}}2, \frac{1+\sqrt{p}+\sqrt{q}+\sqrt{r}}{4}\Bigr\}$ & if $(p,q,\gcd(p,q)) \equiv (1,1,1) \pmod{4}$,\\
    (B4b) & $\O_K=\mathrm{Span}_{\Z}\Bigl\{1, \frac{1+\sqrt{p}}2,\frac{1+\sqrt{q}}2, \frac{1-\sqrt{p}+\sqrt{q}+\sqrt{r}}{4}\Bigr\}$ & if $(p,q,\gcd(p,q)) \equiv (1,1,3) \pmod{4}$.
  \end{tabular}

  It is not hard to see that every totally real biquadratic field belongs to exactly one of these types (after possibly interchanging $p,q,r$).

  Throughout this paper, we will however adhere to a different convention for denoting biquadratic fields. It was already used in~\cite{KraTinZem20,KraRasSga22} and it uses a triple of non-interchangeable letters $m,s,t$, instead of $p,q,r$ used before. We also adopt a more compact notation for $K=\BQ{p}{q}$.

  \begin{convention}\label{con:mst}
    Let $K$ be a biquadratic field. We denote by $m,s,t$ the only three square-free rational integers greater than $1$ which are squares in $K$, which moreover satisfy
      \[
      1 < m < s < t.
      \]
    In the following, we will always denote the field $K$ as $\K{m}{s}$ where the subscripted $m,s$ are \emph{always} the smallest two out of $m,s,t$, moreover in the increasing order.
  \end{convention}

  It will be important to keep this convention in mind throughout the paper. For instance, if we write $\K{6}{s}$, it means that $s>6$ is square-free, $\frac{6s}{\gcd(6,s)^2}>s$, and $K=\BQ{6}{s}$. Hence $K$ is neither $\BQ{6}{2}=\BQ{6}{3}$ nor $\BQ{6}{5}$.

  As a vector space over $\Q$, the field $\K{m}{s}$ has basis $(1,\sqrt{m},\sqrt{s},\sqrt{t})$. In this basis, it is particularly easy to compute the \emph{trace} $\Tr:K\to\Q$, as it satisfies $\Tr(a+b\sqrt{m}+c\sqrt{s}+d\sqrt{t})=4a$. We conclude this section by summarizing those results on the topic of $\P(\O_K)$ for $K=\K{m}{s}$ which are most relevant to us.

  \begin{theorem}[Pythagoras number of integers in biquadratic fields: state-of-the-art]\label{thm:state}
    Let $K$ be a totally real biquadratic field.
    \begin{enumerate}[label= (\alph*)]
      \itemsep 2mm
      \item~\cite{KalYat21,KraRasSga22} $\P(\O_K)\leq 7$ and there exist infinitely many $K$ with $\P(\O_K)=7$.
      \item~\cite{Tin25,KraRasSga22} If $K$ does not contain any of $\sqrt{2}, \sqrt{3}, \sqrt{5}, \sqrt{6}, \sqrt{7}$, then $\P(\O_K)\geq 6$.\footnote{This is almost exactly the main result of~\cite{Tin25}, which has one additional assumption that $K$ does not contain $\sqrt{13}$ either. However, fields $\K{13}{s}$ have already been dealt with in~\cite[Section 8]{KraRasSga22}.}
      \item~\cite{KraRasSga22,HeHu22} If $K$ contains $\sqrt{2}$ or $\sqrt{5}$, then $\P(\O_K) = 5$ with at most six exceptions (for which $\P(\O_K) \leq 5$).
      \item~\cite{KraRasSga22} If $K \neq \K{3}{7}$ contains $\sqrt{3}$, $\sqrt{6}$ or $\sqrt{7}$ and does not belong to the exceptions in (c), then $\P(\O_K) \geq 5$.  
    \end{enumerate}
  \end{theorem}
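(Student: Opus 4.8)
Since Theorem~\ref{thm:state} compiles results from several papers, I would split the task along its natural fault line: the \emph{lower} bounds (parts~(b), (d), and the infinitude in~(a)) versus the \emph{upper} bounds (the bound $7$ in~(a) and the value $5$ in~(c)). In every case the engine is the trace form on $\O_K$. Writing $\gamma=a+b\sqrt{m}+c\sqrt{s}+d\sqrt{t}\in K$ and using $\sqrt{ms}\in\Q\sqrt{t}$, $\sqrt{mt}\in\Q\sqrt{s}$, $\sqrt{st}\in\Q\sqrt{m}$, every term of $\gamma^{2}$ other than its rational part has trace $0$, whence $\Tr(\gamma^{2})=4\big(a^{2}+m b^{2}+s c^{2}+t d^{2}\big)$; thus on squares the absolute trace $\atr$ is, up to the factor $4$, the positive-definite quaternary form $q(\gamma)=a^{2}+m b^{2}+s c^{2}+t d^{2}$. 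Consequently any identity $\alpha=\sum_{i=1}^{n}\gamma_i^{2}$ forces $\atr(\alpha)=\sum_{i=1}^{n}q(\gamma_i)$, a rigid numerical constraint I would exploit repeatedly.

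For the lower bounds~(b) and~(d) I would, for each admissible field, exhibit a totally positive $\alpha\in\sum\O_K^{2}$ whose absolute trace is too small to be met by few squares. The point is that a nonzero $b_i$, $c_i$, or $d_i$ contributes at least $m$, $s$, or $t$ to $q(\gamma_i)$; if $\atr(\alpha)$ is small relative to $m,s,t$, then in any representation only a few coordinates can be irrational, and there are simply not enough ``cheap'' squares to build the prescribed $\sqrt{m}$-, $\sqrt{s}$- and $\sqrt{t}$-parts of $\alpha$ while respecting the integral basis (B1--B4) and the induced parity conditions on $a_i,b_i,c_i,d_i$. A careful count of how many squares such a reconstruction needs then yields $\ell(\alpha)\ge 6$ in~(b) and $\ell(\alpha)\ge5$ in~(d). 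The hypothesis in~(b) that $K$ omits $\sqrt{2},\sqrt{3},\sqrt{5},\sqrt{6},\sqrt{7}$ is exactly what keeps the smallest admissible $m$ large enough that the cheap squares cannot assemble $\alpha$; the weaker hypothesis of~(d), which tolerates $\sqrt{3},\sqrt{6},\sqrt{7}$, is why one there settles for $\ge 5$.

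For the upper bound $\P(\O_K)\le 7$ in~(a) I would invoke that $\O_K$ is a positive-definite $\Z$-lattice of rank $4$ under the trace form and apply a general bound on the Pythagoras number of an order in terms of its degree (as in~\cite{KalYat21}): the geometry of $q$ lets one rewrite any sum of squares using a number of terms depending only on the rank, giving $7$ here; the matching ``infinitely many $K$ with $\P=7$'' is a construction of the above lower-bound type tuned to force seven squares. For~(c), containing $\sqrt{2}$ or $\sqrt{5}$ places $\Q(\sqrt{2})$ or $\Q(\sqrt{5})$ inside $K$, whose rings of integers have unusually small Pythagoras number and rich sum-of-squares identities; I would use these to compress any $\alpha\in\sum\O_K^{2}$ to at most five squares, and clear the finitely many exceptional fields by direct computation.

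The principal obstacle is the uniform lower bound~(b): unlike a single field, one must produce a witness $\alpha$ together with an impossibility argument valid \emph{simultaneously for all} fields avoiding the five forbidden square roots. This demands a case analysis over the basis types (B1--B4) and over the residues of $m,s,t\bmod 4$, and the counting argument must survive as $m,s,t\to\infty$; the genuinely delicate part is ruling out \emph{every} configuration of five squares meeting both the trace constraint $\atr(\alpha)=\sum q(\gamma_i)$ and the congruence constraints, together with isolating and checking the finitely many small fields that escape the generic estimate. This is precisely the step carried out for the remaining classes in~\cite{Tin25} and in the present paper.
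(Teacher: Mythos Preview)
The paper does not prove Theorem~\ref{thm:state} at all: it is a survey statement, each item carrying only a citation to \cite{KalYat21,KraRasSga22,Tin25,HeHu22}, with no argument given in the text. So there is no ``paper's own proof'' to compare your proposal against.

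That said, your sketch is a fair description of the \emph{flavor} of the methods behind those citations, but it is not a proof and could not be made into one at this level of detail. The trace identity $\Tr(\gamma^2)=4(a^2+mb^2+sc^2+td^2)$ is correct and is indeed the main engine for lower bounds, and the paper uses exactly this in Section~\ref{sec:proofs} for $m\in\{6,7\}$. However, for~(b) the actual content of \cite{Tin25,KraRasSga22} is the explicit construction of witnesses $\alpha$ and the full Diophantine case analysis (over basis types and residues mod~$4$) that you allude to only in your last paragraph; saying ``a careful count then yields $\ell(\alpha)\ge 6$'' hides essentially the entire paper \cite{Tin25}. For~(c), your description (``rich sum-of-squares identities \ldots\ compress to at most five squares'') is too vague: the results in \cite{KraRasSga22,HeHu22} rest on classifying indecomposable totally positive integers and on specific arithmetic of $\Z[\sqrt{2}]$ and $\Z\bigl[\frac{1+\sqrt{5}}{2}\bigr]$, not on a generic compression mechanism. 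For the upper bound in~(a), ``a general bound depending only on the rank'' is the right pointer to \cite{KalYat21}, but the constant $7$ is not immediate from rank~$4$ alone.

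In short: nothing you wrote is wrong as a heuristic, but the theorem is a bibliography item in this paper, and your outline does not substitute for the cited proofs.
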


  As can be seen from Theorem~\ref{thm:state}, the exact division line between biquadratic fields with $\P(\O_K)\leq 5$ and $\P(\O_K)\geq 6$ is not known yet. The problematic class of fields are those containing $\sqrt{3}$, $\sqrt{6}$ or $\sqrt{7}$. The authors of~\cite{KraRasSga22} proposed a conjecture that they in fact belong to the latter group, which was supported by several partial results and computer experiments. They conjectured that:

  \begin{itemize}
    \itemsep 1mm
    \item $\P(\O_K)=3$ for $\K{2}{3}$, $\K{2}{5}$,\footnote{The field $\K{2}{5}$ is indeed already known to satisfy $\P(\O_K)=3$, thanks to the upcoming paper by J.~Krásenský and R.~Scharlau. For a sketch of the proof, see the Introduction in~\cite{Kra23}.} $\K{3}{5}$,
    \item $\P(\O_K)=4$ for $\K{2}{7}$, $\K{3}{7}$, $\K{5}{6}$ and $\K{5}{7}$,
    \item $\P(\O_K)=5$ all fields containing $\sqrt{2}$ or $\sqrt{5}$ except those listed above,
    \item $\P(\O_K)\geq 6$ for the remaining fields, with only finitely many exceptions (for which $\P(\O_K)=5$).
  \end{itemize}

  We focus on the last point. In the light of Theorem~\ref{thm:state}(b), the only part left to prove concerns the fields $\K{3}{s}$, $\K{6}{s}$ and $\K{7}{s}$. It is known for sure that they satisfy $\P(\O_K)\geq 5$ up to finitely many exceptions, see Theorem~\ref{thm:state}(d). 
  
  In the following, we will improve this to $\P(\O_K)\geq 6$ for all but finitely many $\K{6}{s}$ and $\K{7}{s}$ and provide more data and ideas for the conjecture that $\P(\O_K)\geq 6$ holds also for all but finitely many $\K{3}{s}$. However, this family of fields is quite hard to handle -- it seems challenging to prove even the weaker statement that infinitely many fields $\K{3}{s}$ satisfy $\P(\O_K)\geq 6$.

  \begin{remark}
    Remark~\ref{rem:support}, Lemma~\ref{lem:m67pc} and Propositions~\ref{pr:m3s1}--\ref{pr:m3s3} are based on computer calculations. In particular, we used:
    \begin{enumerate}[label= (\roman*)]
      \item The publicly available program written by M.~Raška in Python at \url{https://github.com/raskama/number-theory/tree/main/biquadratic} (see \cite[Section 4.2]{KraRasSga22} for more details), which systematically computes lower bounds for $\P(\O_K)$ for fields with small $m,s$. Namely, it computes the lengths of all elements with trace bounded by a given constant.
      \item Several scripts of our own in Wolfram Mathematica for various calculations. All of them were connected to verifying that a given element has length $6$. This is possible, since for any fixed $\alpha\in\O_K$, there is only finitely many squares to be considered as a possible summand. These calculations were independently verified by a script in Magma Computational Algebra System \cite{BosCanPla97}.
    \end{enumerate}    
  \end{remark}

%%%%%%%%%%%%%%%%%%%%%%%%%%%%%%%%%%%%%%%%%%%%%%%%%%%%%%%%%%%%%%%%%%%%%%%%%%%%%%%%%%%%%%%%%%%%%%%%%%%%%%%%%%%%%%%%%%%

\section{Main results}\label{sec:main}

  \begin{theorem}\label{thm:main1}
    Let $K$ be a totally real biquadratic field containing none of $\sqrt{2}$, $\sqrt{3}$, $\sqrt{5}$. Then $\P(\O_K) \geq 6$ except possibly for $K=\K{6}{14}$.
  \end{theorem}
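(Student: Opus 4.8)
The plan is to cover the fields in the statement in three batches. A field $K$ containing none of $\sqrt2,\sqrt3,\sqrt5$ has smallest square-class $m\geq 6$, because $2,3,5$ are the only square-free integers in the interval $(1,6)$. If $K$ also avoids $\sqrt6$ and $\sqrt7$, then Theorem~\ref{thm:state}(b) already gives $\P(\O_K)\geq 6$. Otherwise $6$ or $7$ is a square-class of $K$, and since $2,3,5$ are excluded it is automatically the \emph{smallest} one; Convention~\ref{con:mst} then forces $K=\K6s$ or $K=\K7s$ (the field $\K67$ being the instance $s=7$ of the former). So everything reduces to proving $\P(\O_K)\geq 6$ for these two infinite families, allowing only the single exception $\K6{14}$.

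For a field in either family, set $m\in\{6,7\}$ and use the decomposition $K=\Q(\sqrt m)\oplus\Q(\sqrt m)\sqrt{s}$, writing each $\beta\in\O_K$ as $\beta=\mu+\nu\sqrt{s}$ with $\mu,\nu\in\Q(\sqrt m)$, so that $\beta^2=(\mu^2+s\nu^2)+2\mu\nu\sqrt{s}$. The aim is to exhibit, for every sufficiently large $s$, a totally positive $\alpha_s\in\O_K$ whose $\sqrt{s}$-part is a prescribed \emph{nonzero} element of $\O_{\Q(\sqrt m)}$, together with an explicit representation of $\alpha_s$ as a sum of six squares (checkable by the Mathematica/Magma scripts). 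The nonzero $\sqrt{s}$-part is essential: any sum of squares with nonzero $\sqrt{s}$-part must have some summand with $\nu_j\neq0$, and since the rational part of $\beta_j^2$ already contains the term $s\cdot(\text{rational part of }\nu_j^2)$, its absolute trace is at least of order $s$. Thus $\atr(\alpha_s)$ grows linearly in $s$; this is exactly what lets $\alpha_s$ escape the quadratic subfield, where only shorter representations are available — the reason Theorem~\ref{thm:state}(d) halts at $5$.

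The core is to rule out five squares. Suppose $\alpha_s=\sum_{j=1}^{5}\beta_j^2$ with $\beta_j=\mu_j+\nu_j\sqrt{s}$, and let $r_j\geq0$ denote the rational part of $\nu_j^2$. Summing absolute traces gives $s\sum_j r_j\leq\atr(\alpha_s)$, so $\sum_j r_j\leq\atr(\alpha_s)/s$ stays \emph{bounded} as $s\to\infty$. Since a nonzero $\nu_j\in\O_{\Q(\sqrt m)}$ has $r_j$ bounded below by a fixed positive constant, only boundedly many of the $\nu_j$ can be nonzero, and those lie in a fixed finite set independent of $s$. Matching the $\sqrt{s}$-part $\sum_j 2\mu_j\nu_j$ against the prescribed value, simultaneously with the $\Q(\sqrt m)$-part $\sum_j(\mu_j^2+s\nu_j^2)$, then reduces to a finite, $s$-uniform system over the quadratic order $\O_{\Q(\sqrt m)}$; one shows this system has no five-square solution, while the displayed representation shows six squares suffice, giving $\ell(\alpha_s)=6$. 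I expect this to be the main obstacle: controlling, uniformly in $s$, the handful of summands that reach into the $\sqrt{s}$-direction and proving the coupled system for the two parts is unsolvable with five squares — in particular pinning the bound at $6$ rather than $5$, which is where the arithmetic of $\O_{\Q(\sqrt m)}$ and the size of $t=ms/\gcd(m,s)^2$ genuinely enter.

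Finally, the finitely many fields $\K6s$ and $\K7s$ with $s$ below the threshold of the uniform argument are settled by Raška's program, which computes $\ell$ for all elements of bounded trace and thereby certifies $\P(\O_K)\geq 6$ for each of them. The expectation is that this search succeeds for every small $s$ except $\K6{14}$ — a field in which $\gcd(6,14)=2$ makes $t=21$ unusually small, so that a summand can cheaply reach the $\sqrt{t}$-direction and the five-square obstruction degenerates — leaving it as the lone exception recorded in the statement.
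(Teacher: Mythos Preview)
Your outline matches the paper's architecture: reduce via Theorem~\ref{thm:state}(b) to the two families $\K{6}{s}$ and $\K{7}{s}$, then for each family exhibit a witness $\alpha_s$ of length~$6$, bounding the ``$\sqrt{s}$-part'' of any square summand by a trace argument so that only a finite $s$-uniform system remains. The paper does exactly this, working with the four rational coordinates $(a_i,b_i,c_i,d_i)$ rather than your quadratic packaging $\mu_j+\nu_j\sqrt{s}$; your $r_j$ is their $(c_i^2+md_i^2)/4$, and the bound $\sum r_j\leq \atr(\alpha_s)/s$ is precisely how equations~\eqref{eq:7s1:1},~\eqref{eq:7s2:1}, etc.\ force $d=(\zeros)$ or $\sum d_i^2\leq 2$.

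The gap is that you stop exactly where the proof begins. You never name $\alpha_s$, and you resolve the ``finite, $s$-uniform system'' with the sentence ``one shows this system has no five-square solution,'' followed by ``I expect this to be the main obstacle.'' It is. The paper treats each residue class $s\bmod 4$ separately (Propositions~\ref{pr:m7s1}--\ref{pr:m6s3}), writes down a concrete $\alpha$---for instance $\alpha=3\cdot 1^2+(1+\sqrt7)^2+\bigl(\tfrac{1+\sqrt s}{2}\bigr)^2+\bigl(1+\tfrac{1+\sqrt s}{2}\bigr)^2$ when $m=7$, $s\equiv 1\pmod 4$---derives four Diophantine equations with parity constraints coming from the integral basis, and then solves them step by step to force six summands. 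That casework, together with the specific choice of $\alpha$ making it tractable, is the entire content of the theorem; without it you have a strategy, not a proof. Two further points: your decomposition over $\O_{\Q(\sqrt m)}$ hides the half-integer congruence conditions that depend on the basis type (B1)--(B4), and these are used repeatedly in the paper's eliminations; and the small-$s$ cases are handled not by Raška's trace-bounded search but by directly verifying (Lemma~\ref{lem:m67pc}) that a named element is not a sum of five squares.
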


  \begin{proof}
    As follows from~\cite{Tin25,KraRasSga22}, which we summarize in Theorem~\ref{thm:state}(b), the statement holds for all $K$ not containing any of $\sqrt{2}$, $\sqrt{3}$, $\sqrt{5}$, $\sqrt{6}$ and $\sqrt{7}$. The rest of the proof is contained in Propositions~\ref{pr:m7s1}--\ref{pr:m7s3} for $K=\K{7}{s}$ and in Propositions~\ref{pr:m6s1}--\ref{pr:m6s3} for $K=\K{6}{s}$.
  \end{proof}

  On the other hand, we know that the fields containing $\sqrt{2}$ or $\sqrt{5}$ have $\P(\O_K)\leq 5$, see Theorem~\ref{thm:state}(c). We now proceed to the remaining class of fields, i.e.~to those containing $\sqrt{3}$, for which we provide partial results.

  \begin{theorem}\label{thm:main2}
    If $K=\K{3}{s}$ with $17 \leq s \leq 511$, then $\P(\O_K) \geq 6$, with at most eight exceptions. \footnote{The possible exceptions are $\K{3}{s}$ with $s\in\{170,178,230,238,362,442,446,454\}$. We believe that $\P(\O_K)\geq 6$ will still be true here as the exceptions are probably only due to our lack of computational power.}  
    
    Moreover, if $s \equiv 1 \pmod{4}$, then $\P(\O_K)\geq 6$ holds for all $17 \leq s < 15000$, $315^2 < s < 317^2$, $999^2 < s < 1001^2$ and $3161^2 < s < 3165^2$.
  \end{theorem}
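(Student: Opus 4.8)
The plan is to prove $\P(\O_K)\ge 6$ in the way standard for this circle of problems: exhibit a single $\alpha\in\sum\O_K^2$ whose length is at least $6$, i.e.\ an element that is a sum of squares but provably not a sum of five. The main bookkeeping device is the absolute trace $\atr=\tfrac14\Tr$. Writing $\gamma=a+b\sqrt3+c\sqrt s+d\sqrt t\in\O_K$ in the rational basis $(1,\sqrt3,\sqrt s,\sqrt t)$, and using $t=3s$ together with $\Tr(x)=4a$, the cross terms $\sqrt{3s},\sqrt{3t},\sqrt{st}$ all land on $\sqrt t,\sqrt s,\sqrt 3$ respectively, so the rational part of $\gamma^2$ is clean and $\atr(\gamma^2)=a^2+3b^2+sc^2+td^2\ge 0$. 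Hence in any putative representation $\alpha=\gamma_1^2+\dots+\gamma_5^2$ one has $\atr(\alpha)=\sum_i\atr(\gamma_i^2)$, and every summand with $c_i\neq0$ or $d_i\neq0$ already contributes at least a fixed positive multiple of $s$. Thus for a witness $\alpha$ whose absolute trace is on the order of $s$, all but a bounded number of the $\gamma_i$ must lie in (essentially) $\Z[\sqrt3]$, and only finitely many $\gamma_i$ can occur at all. This finiteness is exactly what makes ``$\ell(\alpha)\ge 6$'' decidable for a fixed $\alpha$, as noted in item (ii) of the Remark.

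For the finite range $17\le s\le 511$ I would run this search field by field. For each such $s$ I would first fix the integral basis according to Convention~\ref{con:mst} and the type (B1)--(B4), then use Raška's program (item (i) of the Remark), which enumerates the lengths of all elements of bounded trace, to locate a concrete $\alpha$ with $\ell(\alpha)\ge6$; the claim $\ell(\alpha)\ge 6$ for the specific witness found is then re-verified by the independent Mathematica and Magma scripts of item (ii). The eight excluded values of $s$ are precisely those for which no witness surfaces below the feasible trace bound, consistent with the footnote that these are artifacts of computational power rather than genuine counterexamples.

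For the ``moreover'' part I would exploit that $s\equiv1\pmod4$ (with $3\nmid s$, forced by Convention~\ref{con:mst}) pins down a single basis type, namely (B3) with basis $1,\sqrt3,\tfrac{1+\sqrt s}2,\tfrac{\sqrt3+\sqrt t}2$, so that one uniform family of witnesses $\alpha=\alpha(s)$ can be attempted across all such $s$ at once. The natural parameter is $n=\lfloor\sqrt s\rfloor$: building $\alpha(s)$ from $n$ and the small quantity $s-n^2$ makes both the total positivity and the trace accounting depend only on $s-n^2$ and on residues of $n$ modulo a few small integers, reducing ``$\ell(\alpha(s))\ge6$'' to finitely many parametric cases. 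This also explains why the proven ranges cluster around perfect squares: when $s$ is close to $n^2$ the quantity $\sqrt s-n$ is small and the indecomposable and continued-fraction structure of $\sqrt s$ is simplest, so the verification is cleanest there, and the three windows about $316^2$, $1000^2$ and $3163^2$ serve as checks that the construction keeps working at the scales $10^5$, $10^6$ and $10^7$.

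The main obstacle is uniformity. Turning the parametric family into a proof valid for all $s\equiv1\pmod4$, rather than only on the stated ranges, requires controlling the totally positive length-$\le5$ representations of $\alpha(s)$ as $s-n^2$ runs over all residues, and it is precisely the failure to do this in closed form that keeps the full $\K{3}{s}$ statement at the level of a conjecture. I expect that any honest argument will therefore combine a genuinely uniform lower-bound mechanism — for instance a $2$-adic (congruence) obstruction forced by the denominators in basis (B3), or a reduction to sums of squares in the quadratic subring $\Z[\sqrt3]$ — with a finite but substantial computation to cover the remaining residue classes and the unavoidable small-$s$ exceptions.
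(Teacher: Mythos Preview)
Your proposal is essentially the paper's own approach: a computational proof that, for each $s$ in the stated ranges, exhibits a specific $\alpha\in\sum\O_K^2$ and then verifies by exhaustive search (Magma/Mathematica, using the trace bound you spelled out) that $\alpha$ is not a sum of five squares. Two points of detail where the paper is slightly sharper than your sketch: first, the paper does not run a blind search field by field over $17\le s\le 511$ but instead splits by $s\bmod 4$ from the outset and writes down, for each residue class, an \emph{explicit} parametric family of witnesses $\alpha(s)$ built from the \emph{odd} floor and ceiling $\flfl{\sqrt s},\cece{\sqrt s}$ (not the ordinary $\lfloor\sqrt s\rfloor$ you suggest), with Ra\v ska's brute-force search used only to fill in a handful of small or sporadic $s$; second, it is precisely the $s\equiv 2\pmod4$ family that fails at the eight listed values, whereas the $s\equiv 1$ and $s\equiv 3$ families cover their full sub-ranges. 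Your reading of the three high windows around $316^2,1000^2,3163^2$ as scale checks at $10^5,10^6,10^7$ is exactly the paper's intent.
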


  \begin{proof}
    The proof is contained in Propositions~\ref{pr:m3s1}--\ref{pr:m3s3}.
  \end{proof}

  This gives us quite a lot of confidence to conjecture that there indeed is a clear division line between biquadratic fields with $\P(\O_K) \leq 5$ and $\P(\O_K) \geq 6$.
  \begin{conjecture}\label{conj:final}
    Let $K$ be a totally real biquadratic field. Then $\P(\O_K)\leq 5$ if and only if one the following is true:
    \begin{itemize}
      \item $K$ contains $\sqrt{2}$ or $\sqrt{5}$, 
      \item $K=\K{3}{s}$ with $s\in\{7,10,11,13,14\}$, 
      \item $K=\K{6}{14}$. 
    \end{itemize}  
  \end{conjecture}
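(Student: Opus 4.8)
The plan is to split the biconditional. The forward (``if'') implication asks for $\P(\O_K)\le5$ on each field of the list. Those containing $\sqrt2$ or $\sqrt5$ are covered by Theorem~\ref{thm:state}(c), which already delivers $\P(\O_K)\le5$ everywhere (including its at most six exceptions). What remains are the finitely many explicit fields $\K{3}{s}$ with $s\in\{7,10,11,13,14\}$ and $\K{6}{14}$, where I must prove the upper bound $\P(\O_K)\le5$ directly---the universal estimate of Theorem~\ref{thm:state}(a) only gives $\P(\O_K)\le7$. Such upper bounds are the subtle part, since they quantify over all totally positive sums of squares simultaneously. For each of these single fields I would work with the positive definite quadratic form $\beta\mapsto\Tr(\beta^2)$ on the lattice $\O_K$ and run an escalation/descent argument in the spirit of the classical proof that $\P(\Z)=4$: subtract a well-chosen square and show the residue is always a sum of four further squares. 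Being one fixed field apiece, this is in principle a finite (if intricate) verification, of the same flavour as the forthcoming determination of $\P(\O_{\K{2}{5}})=3$.

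The reverse implication is the statement that $\P(\O_K)\ge6$ off the list. Here Theorem~\ref{thm:main1} already settles every field avoiding $\sqrt2,\sqrt3,\sqrt5$ apart from the single exception $\K{6}{14}$, which is itself on the list; so the whole burden falls on the fields containing $\sqrt3$, namely the family $\K{3}{s}$, for which I need $\ell(\alpha_s)\ge6$ at a suitable witness $\alpha_s$ for every $s\notin\{7,10,11,13,14\}$. The lower bound would be driven by the trace form: writing $\gamma=a+b\sqrt3+c\sqrt s+d\sqrt t$ (with $t$ the largest radicand) gives $\Tr(\gamma^2)=4\bigl(a^2+3b^2+sc^2+td^2\bigr)$, so any summand with a nonzero $\sqrt s$- or $\sqrt t$-coordinate already costs trace of order $s$. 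Choosing $\alpha_s$ to be a sum of squares of moderate trace but with a prescribed off-diagonal part, only a tightly controlled number of summands may use these ``expensive'' coordinates, while every remaining summand is trapped in $\Q(\sqrt3)$ and contributes nothing to the $\sqrt s$- and $\sqrt t$-coefficients; I would then try to show that the bilinear contributions available from five squares can never match the rational part and the off-diagonal parts at once.

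The genuine obstacle---and the reason the statement is still a conjecture---is precisely the construction and $s$-uniform analysis of such a witness when $m=3$. For $m=6,7$ the Propositions behind Theorem~\ref{thm:main1} supply families that survive for all large $s$, but, as the authors emphasise, for $m=3$ it is unclear how to produce even one witness good for infinitely many $s$: the subfield $\Q(\sqrt3)$ is so efficient at representing small sums of squares that the obvious candidates collapse to length $\le5$. My plan would therefore be to stratify $\K{3}{s}$ by $s\bmod4$ (equivalently by the basis type among B1--B4) and, inside each stratum, to seek a witness whose defining coordinates depend polynomially on $s$ and whose non-representability by five squares can be certified symbolically by the very trace/positivity inequalities that Theorem~\ref{thm:main2} currently checks numerically for $17\le s\le511$ and for $s\equiv1\pmod4$ in the long ranges listed there. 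That computational evidence would serve both to test candidate families and as the base case once an asymptotic argument dispatches all large $s$. I expect essentially all the difficulty to concentrate here: converting the numerics for $\K{3}{s}$ into a closed-form, uniform certificate of length $6$.
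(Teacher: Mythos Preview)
The statement is a \emph{conjecture} in the paper, not a theorem: there is no proof to compare against. The paper offers only partial evidence---Theorem~\ref{thm:main1} for fields avoiding $\sqrt2,\sqrt3,\sqrt5$, Theorem~\ref{thm:main2} and Propositions~\ref{pr:m3s1}--\ref{pr:m3s3} for finitely many $\K{3}{s}$, and Remark~\ref{rem:support} for computational support on the exceptional fields---and then explicitly lists the missing pieces as open problems in Section~\ref{sec:open}.

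Your proposal is not a proof either; it is a research outline, and you acknowledge as much (``the reason the statement is still a conjecture''). Your diagnosis of the two gaps is accurate and matches the paper exactly. First, the upper bound $\P(\O_K)\le5$ for the six listed exceptions $\K{3}{7},\K{3}{10},\K{3}{11},\K{3}{13},\K{3}{14},\K{6}{14}$: the paper does not prove this and merely reports that no element of small trace has length $6$ (Remark~\ref{rem:support}); your ``escalation/descent'' idea is plausible but is precisely the kind of argument that remains open even for a single field such as $\K{2}{3}$ (see Section~\ref{sec:open}). Second, the lower bound $\P(\O_K)\ge6$ for $\K{3}{s}$ with $s\ge17$: the paper states outright (end of Section~1 and Section~\ref{subsec:m3}) that even the weaker claim for infinitely many $s$ is unresolved, and your trace-accounting sketch is the same heuristic that underlies Propositions~\ref{pr:m7s1}--\ref{pr:m6s3} but which, as Section~\ref{subsec:m3} explains, breaks down for $m=3$ because $\P(\O_{\Q(\sqrt3)})=3$ rather than $4$.

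In short: your plan is aligned with the paper's own view of the problem, but it leaves open exactly what the paper leaves open, and so does not constitute a proof.
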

  
  This conjecture can be refined by including the aforementioned conjecture from~\cite{KraRasSga22} that the only fields with $\P(\O_K)=3$ are $\K{2}{3}$, $\K{2}{5}$ and $\K{3}{5}$ and that the only fields with $\P(\O_K)=4$ are $\K{2}{7}$, $\K{3}{7}$, $\K{5}{6}$ and $\K{5}{7}$. 
  
  \begin{remark}\label{rem:support}
    We further support the Conjecture~\ref{conj:final} by the following computational results:
    \begin{itemize}
      \item If $K=\K{2}{3}$ or $K=\K{3}{5}$, then all $\alpha\in \sum\O_K^2$ with $\Tr(\alpha)\leq 750$ have $\ell(\alpha)\leq 5$.
      \item If $K=\K{3}{s}$ with $s\in\{7,10,11,13,14\}$, then all $\alpha\in \sum\O_K^2$ with $\Tr(\alpha)\leq 1000$ have $\ell(\alpha)\leq 5$.
      \item If $K=\K{6}{14}$, then all $\alpha\in \sum\O_K^2$ with $\Tr(\alpha)\leq 1000$ have $\ell(\alpha)\leq 5$.
    \end{itemize}
  \end{remark}

%%%%%%%%%%%%%%%%%%%%%%%%%%%%%%%%%%%%%%%%%%%%%%%%%%%%%%%%%%%%%%%%%%%%%%%%%%%%%%%%%%%%%%%%%%%%%%%%%%%%%%%%%%%%%%%%%%%

\section{Proofs}\label{sec:proofs}

  \begin{convention}\label{con:wlog}
    All the following proofs rely on similar reasoning. We start with some $\alpha\in\O_K$ which is a sum of six squares and our candidate for $\ell(\alpha)=6$. Then we study all possible representations of $\alpha$ as a sum of squares, i.e.~we assume that
      \[
      \alpha = \sum x_i^2= \sum \Bigl(\frac{a_i+b_i\sqrt{m}+c_i\sqrt{s}+d_i\sqrt{t}}{2}\Bigr)^2\,,
      \]
    with some additional parity conditions for $a_i,b_i,c_i,d_i\in\Z$ (so that each $x_i\in\O_K$). This is equivalent to a system of Diophantine equations, which we then proceed to solve.

    For greater clarity, we typically work with sequences $a=(a_i)_{i\geq 1}$ and similarly $b$, $c$ and $d$, instead of individual coefficients $a_i,b_i,c_i,d_i$. For simplicity, we denote these sequences as infinite, even though they can have only finitely many nonzero terms. The strings $\zeros$ and $\evens$ represent infinite tails of zeros and even numbers, respectively. 
    %Likewise, when we state e.g.~that $a$ is even, we mean that all of its entries $a_i$ are even.

    Finally, let us emphasize that the representation of a given number as a sum of squares is typically not unique. For one, both $(a_i,b_i,c_i,d_i)$ and $(-a_i,-b_i,-c_i,-d_i)$ lead to the same square $x_i^2$. For another, the squares can be reordered (which is equivalent to some simultaneous permutation of coefficients in all $a,b,c,d$). \emph{In the proofs, we will often make a choice without loss of generality which is based on these two facts (without mentioning it explicitly, for the sake of brevity).}
  \end{convention}

  We begin with the summary of all computational results necessary for the fields with $m\in\{6,7\}$ and then follow with the general proofs themselves.

  \begin{lemma}\label{lem:m67pc}
    The following elements satisfy $\ell(\alpha)=6$ in their respective fields:

    \begin{enumerate}[label= (\roman*)]
      \item $\alpha = 1^2 + 1^2 + 1^2 + (2+\sqrt{7})^2 + \Bigl(\frac{\sqrt{7}+\sqrt{11}}{2}\Bigr)^2 + \Bigl(1+\frac{\sqrt{7}+\sqrt{11}}{2}\Bigr)^2$ in $\K{7}{11}$,
      \item $\alpha = 1^2 + 1^2 + 1^2 + (2+\sqrt{6})^2 + \Bigl(\frac{\sqrt{6}+\sqrt{10}}{2}\Bigr)^2 + \Bigl(1+\frac{\sqrt{6}+\sqrt{10}}{2}\Bigr)^2$ in $\K{6}{10}$,
      \item $\alpha = 1^2 + 1^2 + 1^2 + (1+\sqrt{6})^2 + \Bigl(\frac{\sqrt{6}+\sqrt{s}}{2}\Bigr)^2 + \Bigl(1+\frac{\sqrt{6}+\sqrt{s}}{2}\Bigr)^2$ in $\K{6}{s}$ \\
      for $22 \leq s \leq 110, s \equiv 2 \pmod{4}$,
      \item $\alpha = 1^2 + 1^2 + 1^2 + (1+\sqrt{6})^2 + \Bigl(\frac{\sqrt{6}+\sqrt{6s}}{2}\Bigr)^2 + \Bigl(1+\frac{\sqrt{6}+\sqrt{6s}}{2}\Bigr)^2$ in $\K{6}{s}$ \\
      for $s\in\{7,11\}$.\footnote{Note that the case of $s=7$ is redundant (and included only for the sake of completeness) as we already know from~\cite[Lemma 5.2]{KraYat23}, that $\K{6}{7}$ has $\P(\O_K)=7$.}
    \end{enumerate}
  \end{lemma}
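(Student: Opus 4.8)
The plan is to verify, for each of the listed elements $\alpha$, that $\ell(\alpha)=6$. Since each $\alpha$ is explicitly written as a sum of six squares, the inequality $\ell(\alpha)\leq 6$ is immediate; the entire content is to establish $\ell(\alpha)\geq 6$, i.e.~to rule out any representation as a sum of five (or fewer) squares. Following Convention~\ref{con:wlog}, I would first fix the integral basis relevant to each field (type B1 or B2 depending on the residues of $m,s$ modulo $4$) and compute the rational coordinates of each $\alpha$ in the basis $(1,\sqrt{m},\sqrt{s},\sqrt{t})$. In particular I would record $\Tr(\alpha)=4a$ where $a$ is the rational part, since the trace is a strong and easily computed invariant that any putative representation must respect.

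The core of the argument is a finiteness reduction. Suppose $\alpha=\sum_{i=1}^{5}x_i^2$ with each $x_i=\tfrac{1}{2}(a_i+b_i\sqrt{m}+c_i\sqrt{s}+d_i\sqrt{t})\in\O_K$. Because $K$ is totally real and each $x_i^2$ is totally nonnegative, each of the four real embeddings of $\alpha$ bounds the corresponding embeddings of the summands; equivalently, summing over embeddings, $\Tr(\alpha)=\sum_i \Tr(x_i^2)$ with every $\Tr(x_i^2)\geq 0$. Writing out $\Tr(x_i^2)=a_i^2+m\,b_i^2+s\,c_i^2+t\,d_i^2$ (up to the normalization forced by the $\tfrac12$ and the relevant parity conditions), this gives an absolute bound on each $|a_i|,|b_i|,|c_i|,|d_i|$ in terms of $\Tr(\alpha)$. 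Thus there are only finitely many candidate tuples $(a_i,b_i,c_i,d_i)$, and the claim reduces to a finite search: one enumerates all ways of choosing five lattice points $x_i\in\O_K$ whose squares are totally nonnegative and sum to the prescribed trace, and checks that none actually sums to $\alpha$ as an algebraic number (matching all four coordinates, not merely the trace). This is exactly the kind of verification the Remark describes as having been carried out in Mathematica and cross-checked in Magma.

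I would organize the search to prune aggressively rather than brute-force the full box. The trace equation $\sum_i(a_i^2+m b_i^2+s c_i^2+t d_i^2)=\Tr(\alpha)$ already forces the large-coefficient summands (those involving $\sqrt{t}$, whose coefficient $t$ is the biggest) to be very sparse, so I would branch first on the $d_i$, then $c_i$, then $b_i$, solving for the $a_i$ via the remaining three coordinate equations. The parity conditions attached to the basis type eliminate many tuples outright. For the infinite families in parts (iii) and (iv), where $s$ ranges over $22\leq s\leq 110$ with $s\equiv 2\pmod 4$ (resp.~$s\in\{7,11\}$ with $t=6s$), the search is a finite loop over the finitely many admissible values of $s$, performing the same bounded enumeration for each; here one must also track how $t=\tfrac{6s}{\gcd(6,s)^2}$ and the basis type vary with $s$.

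The main obstacle is purely computational rather than conceptual: the number of candidate five-tuples grows quickly with $\Tr(\alpha)$, and the individual $\alpha$ in (i) and (ii), together with every field in the family (iii), must each be cleared completely, which is why the paper defers the actual execution to computer algebra and merely asserts the outcome here. A secondary subtlety is ensuring the enumeration is genuinely exhaustive modulo the symmetries noted in Convention~\ref{con:wlog} (sign flips $x_i\mapsto -x_i$ and permutations of the summands), so that fixing a normalization does not inadvertently discard a valid representation; the independent Magma verification is precisely what guards against such an error. Beyond setting up the bounded search correctly and certifying its completeness, no deeper structural input is needed.
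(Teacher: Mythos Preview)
Your proposal is correct and matches the paper's own proof: both reduce the claim $\ell(\alpha)\geq 6$ to a finite exhaustive search (bounded via the trace) and defer the actual enumeration to a computer verification in Magma and Mathematica. Your write-up simply spells out in more detail the finiteness argument and the pruning strategy that underlie the \texttt{IsRepresented} check the paper invokes.
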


  \begin{proof}
    We only need to show that none of these elements can be written as a sum of five squares. In principle, this could be checked by hand. However, the most convenient way of verifying this is using the function \texttt{IsRepresented} in Magma; we also tested it independently by our simple script in Wolfram Mathematica.
  \end{proof}

  \subsection{Fields with $m=7$}\label{subsec:m7}

  With $7$ being a prime, Convention~\ref{con:mst} leads to the following simple assumptions about $s$ and $t$ for $\K{7}{s}$: $s>7$ has to be square-free and not divisible by $7$, and $t=7s$.

    \begin{proposition}\label{pr:m7s1}
      Let $K=\K{7}{s}$ for $s \equiv 1 \pmod{4}$. Then $\P(\O_K)\geq 6$.

      In particular, the following element has length $6$:
        \[
        \alpha = 1^2 + 1^2 + 1^2 + (1+\sqrt{7})^2 + \Bigl(\frac{1+\sqrt{s}}{2}\Bigr)^2 + \Bigl(1+\frac{1+\sqrt{s}}{2}\Bigr)^2.
        \]
    \end{proposition}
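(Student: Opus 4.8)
Since the statement already exhibits $\alpha$ as a sum of six squares in $\O_K$, it suffices to show that $\alpha$ cannot be written as a sum of five squares, and the plan is to carry this out in the style of Convention~\ref{con:wlog}. First I would record the arithmetic of the field: for $\K{7}{s}$ we have $m=7$ and $t=7s$, and since $7\equiv3$ and $s\equiv1\pmod4$ the field is of type (B3), with integral basis $\bigl\{1,\sqrt7,\tfrac{1+\sqrt s}2,\tfrac{\sqrt7+\sqrt{7s}}2\bigr\}$. Writing a generic element as $x=\tfrac12\bigl(a+b\sqrt7+c\sqrt s+d\sqrt{7s}\bigr)$ with $a,b,c,d\in\Z$, membership $x\in\O_K$ is equivalent to the parity conditions $a\equiv c$ and $b\equiv d\pmod2$. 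Expanding the given element yields $\alpha=\tfrac{27+s}2+2\sqrt7+2\sqrt s$, so I would assume for contradiction that $\alpha=\sum_i x_i^2$ with at most five nonzero summands $x_i=\tfrac12\bigl(a_i+b_i\sqrt7+c_i\sqrt s+d_i\sqrt{7s}\bigr)$ and compare the coefficients of $1,\sqrt7,\sqrt s,\sqrt{7s}$. Using $\sqrt7\sqrt s=\sqrt{7s}$, $\sqrt7\sqrt{7s}=7\sqrt s$ and $\sqrt s\sqrt{7s}=s\sqrt7$, this produces the Diophantine system
\[
\sum_i a_i^2+7\sum_i b_i^2+s\sum_i c_i^2+7s\sum_i d_i^2=54+2s,
\]
\[
\sum_i a_ib_i+s\sum_i c_id_i=4,\qquad \sum_i a_ic_i+7\sum_i b_id_i=4,\qquad \sum_i a_id_i+\sum_i b_ic_i=0,
\]
to be solved subject to the parity conditions above.

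The rational equation is the engine of the argument. Rewriting it as $\sum_i a_i^2+7\sum_i b_i^2=54+s\bigl(2-\sum_i c_i^2-7\sum_i d_i^2\bigr)$ and using that the smallest admissible value is $s=13$, nonnegativity of the left-hand side forces $\sum_i d_i^2=0$: a single nonzero $d_i$ would push the bracket down to at most $-5$ and the right-hand side negative. Hence every $d_i=0$, and the parity condition $b_i\equiv d_i$ makes every $b_i$ even. With all $d_i=0$ the third equation becomes $\sum_i a_ic_i=4$; since $a_i\equiv c_i\pmod2$ we have $a_ic_i\equiv c_i^2\pmod2$, whence $\sum_i c_i^2\equiv\sum_i a_ic_i=4\equiv0\pmod2$, so $\sum_i c_i^2$ is \emph{even}. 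The value $\sum_i c_i^2=0$ is excluded at once, as it would give $\sum_i a_ic_i=0\neq4$.

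Next I would split on $\sum_i c_i^2$. In the principal case $\sum_i c_i^2=2$ the rational equation collapses to the $s$-free identity $\sum_i a_i^2+7\sum_i b_i^2=54$. Normalizing $c_1=c_2=1$ and $c_i=0$ otherwise, all variables become bounded: $7\sum_i b_i^2\le54$ gives $|b_i|\le2$, and the fourth equation $\sum_i b_ic_i=b_1+b_2=0$ then forces $b_1=b_2=0$ together with at most one further $b_i=\pm2$. The remaining constraints $a_1+a_2=4$ with $a_1,a_2$ odd, $\sum_i a_ib_i=4$, and $\sum_i a_i^2+7\sum_i b_i^2=54$ admit no integer solution; crucially, this is a finite check entirely independent of $s$. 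For the remaining even values $\sum_i c_i^2\ge4$, the rational equation yields $s\bigl(\sum_i c_i^2-2\bigr)\le54$ with $\sum_i c_i^2-2\ge2$, hence $s\le27$, leaving only the two fields $\K{7}{13}$ and $\K{7}{17}$, which I would dispatch by the same bounded finite search.

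The hard part is the case $\sum_i c_i^2=2$: one must lay out the bounded integer vectors so that exhaustiveness is manifest, and keep the parity bookkeeping airtight, since it is precisely the parities that kill all odd values of $\sum_i c_i^2$ and pin down $b_1=b_2=0$. The genuinely $s$-dependent work is confined to the two small fields with $\sum_i c_i^2\ge4$ and is routine; the real content of the proposition is that the $s$-independent reduction already has no solution, which is what makes a single argument work uniformly for all $s\equiv1\pmod4$.
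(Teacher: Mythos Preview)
Your proposal is correct and follows essentially the same route as the paper: the same element $\alpha$, the same Diophantine system, the same parity argument forcing $d=0$, all $b_i$ even, and $\sum c_i^2$ even and nonzero. The only substantive difference is a small inefficiency in bounding $\sum c_i^2$. The paper notes at once that $\sum a_ib_i=4$ forces some $b_j\neq0$, hence $\sum b_i^2\ge4$; feeding this into the rational equation gives $54+2s\ge 1+28+s\sum c_i^2$, so $\sum c_i^2\le3$ uniformly in $s\ge13$, and no separate treatment of $\K{7}{13}$ or $\K{7}{17}$ is needed. Your weaker bound $s(\sum c_i^2-2)\le54$ forgets this and leaves two fields to a side computation, which is harmless but avoidable.

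One point of phrasing to tighten: in the case $\sum c_i^2=2$ you write that the remaining constraints ``admit no integer solution''. Taken literally this is false---the paper in fact shows that the unique solution is $a=(3,1,2,2,2,2,0,\ldots)$, $b=(0,0,2,0,\ldots)$, $c=(1,1,0,\ldots)$, $d=0$, which has six nonzero summands. What you mean (and what your setup justifies) is that there is no solution with at most five nonzero indices; make that explicit when you carry out the finite check, since this is precisely where the assumption of $\le5$ squares is used.
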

    
    \begin{proof}
      Suppose that $\alpha=\sum x_i^2$ for $x_i\in \O_K$. As $K$ is of type (B3) and the integral basis is $\Bigl(1,\sqrt{7}, \frac{1+\sqrt{s}}{2}, \frac{\sqrt{7}+\sqrt{7s}}{2}\Bigr)$, we may write
        \[
        \alpha = \Bigl(13 + \frac{s+1}{2}\Bigr) + 2\sqrt{7} + 2\sqrt{s} = \sum x_i^2 = \sum\Bigl(\frac{a_i + b_i\sqrt{7} + c_i\sqrt{s} + d_i\sqrt{7s}}{2}\Bigr)^2,
        \]
      where all $a_i\equiv c_i$ and $b_i\equiv d_i \pmod{2}$.

      By expanding the squares and comparing coefficients in front of $1$, $\sqrt{7}$, $\sqrt{s}$ and $\sqrt{7s}$, one gets the following conditions:
        \begin{align}
        54+2s & = \sum a_i^2 + 7\sum b_i^2 + s\sum c_i^2 + 7s\sum d_i^2,\label{eq:7s1:1}\\
        4 & = \sum a_i b_i + s\sum c_i d_i,\label{eq:7s1:2} \\
        4 & = \sum a_i c_i + 7\sum b_i d_i,\label{eq:7s1:3}\\
        0 & = \sum a_i d_i + \sum b_i c_i.\label{eq:7s1:4}
        \end{align}

      Note that, by Convention~\ref{con:mst} ($7=m<s<t$, all square-free), we actually assume $s\geq 13$. If there were a nonzero $d_i$, then $\sum d_i^2\geq 1$ and~\eqref{eq:7s1:1} would yield $54+2s \geq 7s$, which is a contradiction. So $d=(\zeros)$ and $b$ contains only even numbers.

      This simplifies the conditions above, and we get that both in~\eqref{eq:7s1:2} and~\eqref{eq:7s1:3}, the first sum has to be nonzero. Hence $\sum a_i^2, \sum c_i^2 \geq 1$ and $\sum b_i^2 \geq 4$. If $\sum c_i^2 \geq 4$, then~\eqref{eq:7s1:1} would again contradict $s \geq 13$, hence $\sum c_i^2 \in \{1,2,3\}$.

      Without loss of generality\footnote{As we warned in Convention~\ref{con:wlog}, we will in future omit the phrase ``without loss of generality'' in analogous situations.}, $c$ is then one of $(1,\zeros), (1,1,\zeros), (1,1,1,\zeros)$ and $a$ starts with one, two or three odd numbers (respectively) with all others being even. However, the first and the last case would make the right hand side of~\eqref{eq:7s1:3} an odd number, which is a contradiction. That gives us the following conditions:
        \[
        d=(\zeros), c=(1,1,\zeros), b=(\evens), a=(\text{odd},\text{odd},\evens).
        \]

      Equation~\eqref{eq:7s1:1} now reads $\sum a_i^2 + 7\sum b_i^2 = 54$ and we already know that $\sum b_i^2 \geq 4$ is divisible by $4$. Necessarily, $\sum a_i^2=26$ and $\sum b_i^2=4$, hence there exists exactly one nonzero $b_j=\pm 2$. By~\eqref{eq:7s1:4}, $b_1+b_2=0$, which leads to $j\geq 3$ and, without loss of generality, we can choose $b=(0,0,2,\zeros)$.

      It remains to determine $a=(\text{odd},\text{odd},\evens)$. It has to satisfy
        \[
        \sum a_i^2=26,\ a_3=2,\ a_1+a_2=4.
        \]

      This leads to $\{a_1,a_2\}=\{3,1\}$, $a_3=2$ and exactly three more $a_i$ equal to $2$ in modulus. Hence we have proven that if $\alpha$ is a sum of squares, then the corresponding coefficients must satisfy (without loss of generality):
        \[
        d=(\zeros), c=(1,1,\zeros), b=(0,0,2,\zeros), a=(3,1,2,2,2,2,\zeros).
        \]
      
      It follows that $\alpha$ has length $6$, as the only way to express it as a sum of squares is
        \[
        \alpha = \Bigl(\frac{3+\sqrt{s}}{2}\Bigr)^2 + \Bigl(\frac{1+\sqrt{s}}{2}\Bigr)^2 + (1+\sqrt{7})^2 + 1^2 + 1^2 + 1^2.\hfill\qedhere
        \]
    \end{proof}

    %%%%%%%%%%%%%%%%%%%%%%%%%%%%%%%%%%%%%%%%%%%%%%%%%%%%%%%%%%%%%%%%%%%%%%%%%%%%%%%%%%%%%%%%%%%%%%%%%%%%%%%%%%%%%%%%%%%
    
    \begin{proposition}\label{pr:m7s2}
      Let $K=\K{7}{s}$ for $s \equiv 2 \pmod{4}$. Then $\P(\O_K)\geq 6$.

      In particular, the following element has length $6$:      
        \[
        \alpha = 1^2 + 1^2 + 1^2 + (1+\sqrt{7})^2 + \Bigl(\frac{\sqrt{s}+\sqrt{7s}}{2}\Bigr)^2 + \Bigl(1+\frac{\sqrt{s}+\sqrt{7s}}{2}\Bigr)^2.
        \]
    \end{proposition}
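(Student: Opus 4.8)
The plan is to mirror the argument of Proposition~\ref{pr:m7s1}, adapting it to the integral basis appropriate for $s\equiv 2\pmod 4$. Since $7\equiv 3$ and $s\equiv 2\pmod 4$, the field $\K{7}{s}$ is of type (B1) (after identifying $p,q$), with integral basis $\bigl(1,\sqrt{7},\sqrt{s},\tfrac{\sqrt{7}+\sqrt{7s}}{2}\bigr)$, so the parity conditions on the coefficients $a_i,b_i,c_i,d_i$ will read $b_i\equiv d_i\pmod 2$ with $a_i,c_i$ free. First I would compute the rational-basis expansion of the proposed $\alpha$ and write $\alpha=\sum x_i^2$ with $x_i=\tfrac{1}{2}(a_i+b_i\sqrt7+c_i\sqrt s+d_i\sqrt{7s})$. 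Expanding and comparing the coefficients of $1,\sqrt7,\sqrt s,\sqrt{7s}$ yields four Diophantine conditions analogous to~\eqref{eq:7s1:1}--\eqref{eq:7s1:4}; the key one is the ``norm'' equation of the form
\[
C + 2s = \sum a_i^2 + 7\sum b_i^2 + s\sum c_i^2 + 7s\sum d_i^2,
\]
for an explicit constant $C$ coming from the expansion of the first four squares.

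\textbf{Eliminating the large-coefficient terms.}
The driving idea, exactly as before, is that the largest square-free element $t=7s$ forces its coefficient sequence $d$ to vanish: invoking Convention~\ref{con:mst} we have $s\ge 13$, so a single nonzero $d_i$ would make the right-hand side of the norm equation at least $7s$, exceeding $C+2s$ for $s$ large, a contradiction; I would isolate the finitely many small $s$ (those with $s<C$, of which there are none once $C$ is computed) and check they do not arise. With $d=\zeros$, the parity condition forces all $b_i$ even, so $\sum b_i^2$ is a multiple of $4$. The off-diagonal equations then pin down $\sum c_i^2$ (here the term $\tfrac{\sqrt s+\sqrt{7s}}{2}$ contributes, so I expect $\sum c_i^2$ to be forced into a small range such as $\{1,2,3\}$ again, with parity of a cross-term ruling out the odd cases, leaving $c=(1,1,\zeros)$ up to the usual reordering and sign choices of Convention~\ref{con:wlog}).

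\textbf{Finishing the bookkeeping.}
Once $d$, $c$, and the multiple-of-four constraint on $\sum b_i^2$ are fixed, the norm equation collapses to $\sum a_i^2 + 7\sum b_i^2 = C'$ for an explicit $C'$, which together with the linear conditions from the $\sqrt7$ and $\sqrt{7s}$ equations determines $b$ (one entry $\pm 2$, the rest zero) and then $a$ (a short list of odd/even entries summing correctly) up to sign and permutation. The conclusion is that the \emph{only} representation of $\alpha$ as a sum of squares is the displayed six-term one, whence $\ell(\alpha)=6$ and therefore $\P(\O_K)\ge 6$.

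\textbf{Main obstacle.}
I expect the real subtlety to lie not in the large-coefficient elimination (which is robust) but in the middle bookkeeping step: with $s\equiv 2\pmod 4$ the relevant square term is $\tfrac{\sqrt s+\sqrt{7s}}{2}$ rather than $\tfrac{1+\sqrt s}{2}$, so the cross-terms coupling $c$ and $d$ (and the parity argument that killed the $|c|=1,3$ cases in Proposition~\ref{pr:m7s1}) must be re-derived carefully, since the roles of the basis vectors have shifted. In particular I must verify that the parity obstruction forcing $\sum c_i^2=2$ survives in this basis, and confirm there is no sporadic small-$s$ exception slipping through the norm bound; this case analysis, though elementary, is where an error would most likely hide.
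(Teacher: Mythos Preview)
Your sketch carries over the wrong template. In Proposition~\ref{pr:m7s1} the element $\alpha$ involved $\tfrac{1+\sqrt s}{2}$, so its rational part grew like $\tfrac{s}{2}$ and the norm equation had the shape $C+2s=\cdots$; that is what made the $7s\sum d_i^2$ term unaffordable and forced $d=(\zeros)$. Here $\alpha$ instead contains $\tfrac{\sqrt s+\sqrt{7s}}{2}$, whose square contributes $2s$ to the rational part \emph{and} a nonzero $\sqrt{7s}$ coefficient. A direct expansion gives
\[
\alpha=(12+4s)+(2+s)\sqrt7+\sqrt s+\sqrt{7s},
\]
so the norm equation is $48+16s=\sum a_i^2+7\sum b_i^2+s\sum c_i^2+7s\sum d_i^2$. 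With the right-hand side bounded above by $48+16s$, the inequality $7s\sum d_i^2\le 48+16s$ allows $\sum d_i^2\in\{0,1,2\}$ for all $s\ge 10$, and in fact the unique representation has $d=(1,1,\zeros)$. Your claimed elimination $d=(\zeros)$ therefore fails; it was an artefact of the mistaken ``$C+2s$'' shape.

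Two related errors feed into this. First, the integral basis for type (B1) here is $\bigl(1,\sqrt7,\sqrt s,\tfrac{\sqrt s+\sqrt{7s}}{2}\bigr)$ (the congruence $(p,q)\equiv(2,3)$ forces $p=s$, not $p=7$), so $\tfrac{\sqrt7+\sqrt{7s}}{2}$ is not even an algebraic integer when $s\equiv 2\pmod 4$. Second, the resulting parity conditions are $a_i\equiv b_i\equiv 0\pmod 2$ and $c_i\equiv d_i\pmod 2$, not ``$b_i\equiv d_i$ with $a_i,c_i$ free''. These are exactly what the paper uses to rule out $\sum d_i^2=0$ (all four sequences become even, contradicting $2=\sum a_ic_i+7\sum b_id_i$ modulo $4$) and $\sum d_i^2=1$ (the $\sqrt7$-equation $4+2s=\sum a_ib_i+s\sum c_id_i$ fails modulo the right quantity). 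Only after establishing $d=(1,1,\zeros)$ and $c=(\text{odd},\text{odd},\evens)$ does the bookkeeping resemble your final paragraph, but with different numerics: one finds $\sum a_i^2=20$, $\sum b_i^2=4$, a sign analysis on $c_1,c_2$, and finally $a=(2,0,2,2,2,2,\zeros)$.
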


    \begin{proof}
      Suppose that $\alpha=\sum x_i^2$ for $x_i\in \O_K$. As $K$ is of type (B1) and the integral basis is $\Bigl(1,\sqrt{7}, \sqrt{s}, \frac{\sqrt{s}+\sqrt{7s}}{2}\Bigr)$, we may write
        \[
        \alpha = (12 + 4s) + (2+s)\sqrt{7} + \sqrt{s} + \sqrt{7s} = \sum x_i^2 = \sum\Bigl(\frac{a_i + b_i\sqrt{7} + c_i\sqrt{s} + d_i\sqrt{7s}}{2}\Bigr)^2,
        \]
      where all $a_i\equiv b_i\equiv 0$ and $c_i\equiv d_i \pmod{2}$.

      By expanding the squares and comparing coefficients, one gets the following conditions:
        \begin{align}
        48+16s & = \sum a_i^2 + 7\sum b_i^2 + s\sum c_i^2 + 7s\sum d_i^2,\label{eq:7s2:1}\\
        4+2s & = \sum a_i b_i + s\sum c_i d_i,\label{eq:7s2:2} \\
        2 & = \sum a_i c_i + 7\sum b_i d_i,\label{eq:7s2:3}\\
        2 & = \sum a_i d_i + \sum b_i c_i.\label{eq:7s2:4}
        \end{align}

      From here, we continue the proof in a similar way as for Proposition~\ref{pr:m7s1}. Note that, by Convention~\ref{con:mst}, we  assume $s\geq 10$. Together with~\eqref{eq:7s2:1}, this yields that $\sum d_i^2$ cannot be greater than $2$. The case $\sum d_i^2=0$ leads to all $a_i,b_i,c_i$ being even, which can be ruled out by considering~\eqref{eq:7s2:3} modulo $4$. If we assume $\sum d_i^2=1$, hence $d=(1,\zeros)$ and $c=(\text{odd},\evens)$, then a similar divisibility reasoning will lead to a contradiction in~\eqref{eq:7s2:2}.

      So, we get $d=(1,1,\zeros)$ and $c=(\text{odd},\text{odd},\evens)$. If we consider~\eqref{eq:7s2:2} modulo $s$, we get that $b\neq(\zeros)$, which leads to $\sum b_i^2\geq 4$.

      Thanks to $s\geq 10 $ and~\eqref{eq:7s2:1}, it follows that $\sum c_i^2$ cannot exceed $4$. So $|c_1|=|c_2|=1$ and because of the parity conditions, all remaining $c_i=0$. Moreover, one cannot have $c_1=c_2=-1$, as adding together~\eqref{eq:7s2:3}$+$\eqref{eq:7s2:4} gives $4=6(b_1+b_2)$, a contradiction. We arrive at
        \[
        d=(1,1,\zeros), c=(1,\pm1,\zeros), b=(\evens), a=(\evens).
        \]

      From~\eqref{eq:7s2:1} we get $48 = \sum a_i^2 + 7\sum b_i^2$, which yields $\sum a_i^2 = 20$ and $\sum b_i^2=4$. Hence there must be exactly one $b_j = \pm2$, while all others are zero. Consequently, one can exclude the case $c_2=-1$, it suffices to use~\eqref{eq:7s2:2}. Finally, subtracting~\eqref{eq:7s2:3}$-$\eqref{eq:7s2:4} gives us $b_1+b_2=0$, so, necessarily, those must be zero and we choose $b_3=2$.

      Updating all the values gives
        \[
        d=(1,1,\zeros), c=(1,1,\zeros), b=(0,0,2,\zeros), a=(\evens)
        \]
      and it remains to deal with $a$, satisfying $a_i\equiv 0\pmod{2}$, $\sum a_i^2=20,\ a_3=2,\ a_1+a_2=2$. This necessarily leads to $a=(2,0,2,2,2,2,\zeros)$. Thus the only way of writing $\alpha$ as a sum of squares is
        \[
        \alpha = \Bigl(1+\frac{\sqrt{s}+\sqrt{7s}}{2}\Bigr)^2 + \Bigl(\frac{\sqrt{s}+\sqrt{7s}}{2}\Bigr)^2 + (1+\sqrt{7})^2 + 1^2 + 1^2 + 1^2\,,
        \]
      in particular, $\ell(\alpha)=6$.
    \end{proof}

    %%%%%%%%%%%%%%%%%%%%%%%%%%%%%%%%%%%%%%%%%%%%%%%%%%%%%%%%%%%%%%%%%%%%%%%%%%%%%%%%%%%%%%%%%%%%%%%%%%%%%%%%%%%%%%%%%%%

    \begin{proposition}\label{pr:m7s3}
      Let $K=\K{7}{s}$ for $s \equiv 3 \pmod{4}$. Then $\P(\O_K)\geq 6$.

      In particular, the following element has length $6$:
        \[
        \alpha = \begin{cases}
          1^2 + 1^2 + 1^2 + (1+\sqrt{7})^2 + \Bigl(\frac{\sqrt{7}+\sqrt{s}}{2}\Bigr)^2 + \Bigl(1+\frac{\sqrt{7}+\sqrt{s}}{2}\Bigr)^2 & \text{ if } s\geq 15,\\
          1^2 + 1^2 + 1^2 + (2+\sqrt{7})^2 + \Bigl(\frac{\sqrt{7}+\sqrt{11}}{2}\Bigr)^2 + \Bigl(1+\frac{\sqrt{7}+\sqrt{11}}{2}\Bigr)^2 & \text{ if } s=11.
        \end{cases}
        \]
    \end{proposition}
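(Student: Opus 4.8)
The case $s=11$ is in fact already disposed of: the element displayed for $s=11$ is precisely the one in Lemma~\ref{lem:m67pc}(i), so for that field it suffices to invoke that (computer-verified) computation. From now on I would assume $s\geq 15$, which by Convention~\ref{con:mst} (recall $s>7$ square-free, $7\nmid s$, $s\equiv 3\pmod 4$) covers every remaining field. As in Propositions~\ref{pr:m7s1} and~\ref{pr:m7s2}, the field $K$ is of type (B3); the relevant integral basis is $\bigl(1,\sqrt{7},\frac{1+\sqrt{7s}}{2},\frac{\sqrt{7}+\sqrt{s}}{2}\bigr)$, which imposes the parity conditions $a_i\equiv d_i$ and $b_i\equiv c_i\pmod 2$. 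Writing $\alpha=\frac{31+s}{2}+3\sqrt{7}+\sqrt{s}+\sqrt{7s}=\sum x_i^2$ and comparing the coefficients of $1,\sqrt{7},\sqrt{s},\sqrt{7s}$ gives
\begin{align}
62+2s &= \sum a_i^2 + 7\sum b_i^2 + s\sum c_i^2 + 7s\sum d_i^2,\\
6 &= \sum a_ib_i + s\sum c_id_i,\\
2 &= \sum a_ic_i + 7\sum b_id_i,\\
2 &= \sum a_id_i + \sum b_ic_i.
\end{align}

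First I would use the first equation together with $s\geq 15$ (so that $5s>62$) to force $d=(\zeros)$: a single nonzero $d_i$ would push the right-hand side to at least $7s>62+2s$. With $d=(\zeros)$ the condition $a_i\equiv d_i$ makes every $a_i$ even, and the last three equations collapse to $\sum a_ib_i=6$, $\sum a_ic_i=2$ and $\sum b_ic_i=2$, exactly the type of simplification used in the two preceding propositions. This step is also where the split at $s=11$ comes from: for $s=11$ one has $7s<62+2s$, so a nonzero $d_i$ is no longer excluded and the clean argument breaks down, which is why a different witness is needed there.

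The heart of the argument is to pin down $\sum c_i^2$. Reading $\sum b_ic_i=2$ modulo $2$ and using $b_i\equiv c_i$, the left-hand side is congruent to the number of odd entries of $c$; hence that number is even, and therefore $\sum c_i^2$ is even as well. Since $\sum a_ic_i=2\neq 0$ forces $c\neq(\zeros)$, this already yields $\sum c_i^2\geq 2$. To get equality I would show $\sum c_i^2\geq 4$ is impossible: the first equation rewrites as $\sum a_i^2+7\sum b_i^2=62-(\sum c_i^2-2)s$, whose right-hand side is nonnegative only for small $s$, and for those few values the even number of odd $c_i$ (each forcing a nonzero, hence $|b_i|\geq 1$, entry of $b$) makes $7\sum b_i^2$ overshoot the available budget. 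This leaves $\sum c_i^2=2$, i.e.\ exactly two entries $c_i=\pm1$; normalizing them to $+1$ and substituting $a_i=2\tilde a_i$ reduces the system to $4\sum\tilde a_i^2+7\sum b_i^2=62$ with $\sum\tilde a_ib_i=3$, $\tilde a_1+\tilde a_2=1$ and $b_1+b_2=2$. The divisibility $4\mid 62-7\sum b_i^2$ forces $\sum b_i^2\in\{2,6\}$; the case $\sum b_i^2=2$ dies by Cauchy--Schwarz (equality there makes $b=c$, contradicting $\sum\tilde a_ib_i=3$), and $\sum b_i^2=6$, $\sum\tilde a_i^2=5$ then determines the representation uniquely up to the symmetries of Convention~\ref{con:wlog}, exhibiting six nonzero summands and hence $\ell(\alpha)=6$.

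I expect the only genuinely delicate part to be the middle range of $s$ (roughly $15\leq s\leq 31$), where the size bound from the first equation does not by itself exclude $\sum c_i^2=4$. Here one must combine the \emph{evenness} of the number of odd $c_i$ with the lower bound $\sum b_i^2\geq(\text{number of odd }c_i)$ and a little Cauchy--Schwarz to clear these finitely many configurations one by one; for $s\geq 63$ the value $\sum c_i^2=2$ is immediate from the first equation. Everything else is routine bookkeeping of parities and a finite search, entirely parallel to Propositions~\ref{pr:m7s1} and~\ref{pr:m7s2}.
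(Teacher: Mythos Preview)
Your argument is correct and follows essentially the same route as the paper's proof: set up the same four Diophantine equations, force $d=(\zeros)$ from the trace bound for $s\geq 15$, use the parity $b_i\equiv c_i$ together with~\eqref{eq:7s3:3},~\eqref{eq:7s3:4} to pin down $c=(1,1,\zeros)$, and then solve for $b$ and $a$ to obtain the unique six-term representation. The only cosmetic differences are that you invoke Cauchy--Schwarz to kill the cases $\sum b_i^2=2$ and $(\sum c_i^2,s)=(4,15)$, whereas the paper eliminates these by direct comparison of~\eqref{eq:7s3:2} and~\eqref{eq:7s3:3} and by an explicit check of $b=(1,1,1,-1,\zeros)$; both are equally short.
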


    \begin{proof}
      As the case $s=11$ is handled in Lemma~\ref{lem:m67pc}, we assume $s\geq 15$. Again, suppose that $\alpha=\sum x_i^2$ for $x_i\in \O_K$. As $K$ is of type (B3) and the integral basis is $\Bigl(1,\sqrt{7}, \frac{1+\sqrt{7s}}{2}, \frac{\sqrt{7}+\sqrt{s}}{2}\Bigr)$, we may write
        \[
        \alpha = \Bigl(15 + \frac{s+1}{2}\Bigr) + 3\sqrt{7} + \sqrt{s} + \sqrt{7s} = \sum x_i^2 = \sum\Bigl(\frac{a_i + b_i\sqrt{7} + c_i\sqrt{s} + d_i\sqrt{7s}}{2}\Bigr)^2,
        \]
      where all $a_i\equiv d_i$ and $b_i\equiv c_i \pmod{2}$.

      By expanding the squares and comparing coefficients, one gets the following conditions:
        \begin{align}
        62+2s & = \sum a_i^2 + 7\sum b_i^2 + s\sum c_i^2 + 7s\sum d_i^2,\label{eq:7s3:1}\\
        6 & = \sum a_i b_i + s\sum c_i d_i,\label{eq:7s3:2} \\
        2 & = \sum a_i c_i + 7\sum b_i d_i,\label{eq:7s3:3}\\
        2 & = \sum a_i d_i + \sum b_i c_i.\label{eq:7s3:4}
        \end{align}

      One can see from~\eqref{eq:7s3:1} and $s\geq 15$ that necessarily $\sum d_i^2=0$, so $d=(\zeros)$ and $a=(\evens)$. From~\eqref{eq:7s3:2},~\eqref{eq:7s3:3} and~\eqref{eq:7s3:4}, one immediately sees that none of $a,b,c$ can be $(\zeros)$, so $\sum a_i^2 \geq 4$ and $\sum b_i^2, \sum c_i^2 \geq 1$.

      While~\eqref{eq:7s3:1} gives us an upper bound $\sum c_i^2 \leq 5$, we can narrow it down to $\sum c_i^2=2$ in the following few steps. As all $b_i\equiv c_i\pmod{2}$, it follows from~\eqref{eq:7s3:4} that the number of odd entries in $c$ must be even. Similarly, as all $a_i$ are even, \eqref{eq:7s3:3} gives us that at least one $c_i$ must be odd. Consequently
        \[
        c\in\{(1,1,\zeros), (1,1,1,1,\zeros)\}
        \]
      and it remains to eliminate the latter. It is easy to see that $c=(1,1,1,1,\zeros)$ implies $s=15$ and $\sum a_i^2=\sum b_i^2=4$. Consequently, $b=(1,1,1,-1,\zeros)$ by~\eqref{eq:7s3:4}, and there is no solution $a$ satisfying both \eqref{eq:7s3:2} and~\eqref{eq:7s3:3} at the same time. Hence we arrive at
        \[
        d=(\zeros), c=(1,1,\zeros), b=(\text{odd}, \text{odd}, \evens), a=(\evens).
        \]

      From~\eqref{eq:7s3:1} we get $62 = \sum a_i^2 + 7\sum b_i^2$, which gives us $\sum b_i^2 \leq8$ and all $|b_i| \leq 2$. It follows from~\eqref{eq:7s3:4} that $b_1=b_2=1$.
      Moreover, $b_3\in\{0,2\}$ and all other $b_i=0$. But as $b_3=0$ leads to a contradiction between~\eqref{eq:7s3:2} and~\eqref{eq:7s3:3}, we get $b=(1,1,2,\zeros)$.

      It remains to deal with $a$, satisfying $a_i\equiv 0\pmod{2}$, $\sum a_i^2=20,\ a_1+a_2+2a_3=6,\ a_1+a_2=2$. This leads to $a=(2,0,2,2,2,2,\zeros)$. Thus the only way of writing $\alpha$ as a sum of squares is
        \[
        \alpha = \Bigl(1+\frac{\sqrt{7}+\sqrt{s}}{2}\Bigr)^2 + \Bigl(\frac{\sqrt{7}+\sqrt{s}}{2}\Bigr)^2 + (1+\sqrt{7})^2 + 1^2 + 1^2 + 1^2\,,
        \]
      in particular, $\ell(\alpha)=6$.
    \end{proof}

  %%%%%%%%%%%%%%%%%%%%%%%%%%%%%%%%%%%%%%%%%%%%%%%%%%%%%%%%%%%%%%%%%%%%%%%%%%%%%%%%%%%%%%%%%%%%%%%%%%%%%%%%%%%%%%%%%%%

  \subsection{Fields with $m=6$}\label{subsec:m6}

    As $6$ is not a prime, the relations between $m,s,t$ for $\K{6}{s}$ following from Convention~\ref{con:mst} fall into two types: If $s>6$, square-free, is coprime with $6$, then $t=6s$. On the other hand, it may happen that $\gcd(6,s)>1$. Necessarily, $\gcd(6,s)=2$, $s$ is even and $t=\frac{3}{2}s$.

    \begin{proposition}\label{pr:m6s1}
      Let $K=\K{6}{s}$ for $s \equiv 1 \pmod{4}$. Then $\P(\O_K)\geq 6$.

      In particular, the following element has length $6$:
        \[
        \alpha = 1^2 + 1^2 + 1^2 + (1+\sqrt{6})^2 + \Bigl(\frac{1+\sqrt{s}}{2}\Bigr)^2 + \Bigl(1+\frac{1+\sqrt{s}}{2}\Bigr)^2.
        \]
    \end{proposition}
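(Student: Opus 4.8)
The plan is to follow the now-familiar template established in Propositions~\ref{pr:m7s1}--\ref{pr:m7s3}, adapted to the field $K=\K{6}{s}$ with $s\equiv 1\pmod 4$. First I would determine the integral basis. With $m=6\equiv 2$ and $s\equiv 1\pmod 4$, the pair $(p,q)=(6,s)\equiv(2,1)\pmod 4$ falls under type (B2), so the basis is $\bigl(1,\sqrt 6,\frac{1+\sqrt s}2,\frac{\sqrt 6+\sqrt t}2\bigr)$ where $t=6s$ (since $\gcd(6,s)=1$ when $s\equiv 1\pmod 4$, as $s$ is odd). The parity conditions on the coefficients $a_i,b_i,c_i,d_i$ in the expansion $x_i=\frac{a_i+b_i\sqrt 6+c_i\sqrt s+d_i\sqrt{6s}}2$ are that $a_i\equiv c_i$ and $b_i\equiv d_i\pmod 2$, exactly as in Proposition~\ref{pr:m7s1}. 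I would then compute the target element $\alpha$ in this basis. Expanding the proposed sum of six squares, the trace-coefficient (coefficient of $1$) should come out to $13+\frac{s+1}2$, and the remaining coefficients to $2\sqrt 6$ and $2\sqrt s$, giving $\alpha=\bigl(13+\frac{s+1}2\bigr)+2\sqrt 6+2\sqrt s$, with no $\sqrt{6s}$ term.

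Next I would set up the four Diophantine conditions by comparing coefficients. These read
\begin{align*}
54+2s & = \sum a_i^2 + 6\sum b_i^2 + s\sum c_i^2 + 6s\sum d_i^2,\\
4 & = \sum a_i b_i + s\sum c_i d_i,\\
4 & = \sum a_i c_i + 6\sum b_i d_i,\\
0 & = \sum a_i d_i + \sum b_i c_i.
\end{align*}
The first-coefficient equation multiplies to $4\bigl(13+\frac{s+1}2\bigr)=54+2s$, matching the shape seen before. By Convention~\ref{con:mst} with $6=m<s<t=6s$, the smallest admissible $s\equiv 1\pmod 4$ is $s=13$ (as $s$ must be coprime to $6$ and exceed $6$). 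The size argument then proceeds: if any $d_i\ne 0$ then the first equation forces $54+2s\ge 6s$, i.e.\ $s\le 13.5$, so only $s=13$ could survive a single $d_i$; I would check that even this residual case is killed by a parity/divisibility look at the cross-term equations, forcing $d=(\zeros)$ and hence $b=(\evens)$.

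With $d=(\zeros)$, the argument becomes structurally identical to Proposition~\ref{pr:m7s1}, only with the coefficient $7$ replaced by $6$. From the second and third equations neither $\sum a_i^2$ nor $\sum c_i^2$ can vanish, and $\sum b_i^2\ge 4$; the size bound caps $\sum c_i^2\le 3$, and the parity of the right-hand side of the third equation eliminates $|c|$ supported on one or three coordinates, leaving $c=(1,1,\zeros)$ with $a$ beginning in two odd entries. The first equation then collapses to $\sum a_i^2+6\sum b_i^2=54$ with $\sum b_i^2\ge 4$ divisible by $4$; I expect $\sum b_i^2=4$ and $\sum a_i^2=30$ to be forced, so exactly one $b_j=\pm 2$, and the fourth equation gives $b_1+b_2=0$, pinning $b=(0,0,2,\zeros)$. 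Finally I would solve $\sum a_i^2=30$, $a_3=2$, $a_1+a_2=4$ for $a=(\text{odd},\text{odd},\evens)$, which should yield $a=(3,1,2,2,2,2,2,\zeros)$ and thus reconstruct $\alpha$ uniquely as the displayed six squares, giving $\ell(\alpha)=6$. The main obstacle I anticipate is the bookkeeping around $s=13$: because $6$ is smaller than $7$, the size inequalities are slightly looser, so I would need to verify carefully that no extra small-$s$ solution with a nonzero $d_i$ or a larger $\sum c_i^2$ sneaks through, handling $s=13$ by hand if the generic counting does not immediately exclude it.
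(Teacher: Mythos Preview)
Your approach is exactly the paper's: the paper simply says the proof is ``almost verbatim the same'' as Proposition~\ref{pr:m7s1} and records the four Diophantine equations with the same congruence conditions. So strategically you are on target.

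However, there is an arithmetic slip that derails the endgame. The rational part of $\alpha$ is not $13+\frac{s+1}{2}$ but $12+\frac{s+1}{2}$, since $(1+\sqrt 6)^2=7+2\sqrt 6$ contributes $7$, not $8$, to the trace term. Consequently the first equation reads
\[
50+2s \;=\; \sum a_i^2 + 6\sum b_i^2 + s\sum c_i^2 + 6s\sum d_i^2,
\]
which is precisely what the paper records. With $50+2s$ the inequality $50+2s\ge 6s$ gives $s\le 12.5$, so $d=(\zeros)$ is forced immediately for every admissible $s\ge 13$ and no special case at $s=13$ arises. After reducing to $c=(1,1,\zeros)$ the equation becomes $\sum a_i^2+6\sum b_i^2=50$, whence $\sum b_i^2=4$ and $\sum a_i^2=26$; the constraints $a_3=2$, $a_1+a_2=4$ then yield $a=(3,1,2,2,2,2,\zeros)$ with exactly six nonzero entries, recovering the displayed decomposition and $\ell(\alpha)=6$.

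Your value $\sum a_i^2=30$ would instead produce $a=(3,1,2,2,2,2,2,\zeros)$ with \emph{seven} nonzero entries. That would say every representation of $\alpha$ uses at least seven squares, in contradiction with the six-square expression you started from --- a red flag that something upstream is off. Once the constant is corrected, your outline goes through verbatim.
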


    \begin{proof}
      The proof is almost verbatim the same as the proof of Proposition~\ref{pr:m7s1}. This is due to the fact that the corresponding systems of Diophantine equations are almost identical. Indeed, in this case we have
        \begin{align}
        50+2s & = \sum a_i^2 + 6\sum b_i^2 + s\sum c_i^2 + 6s\sum d_i^2,\label{eq:6s1:1}\\
        4 & = \sum a_i b_i + s\sum c_i d_i,\label{eq:6s1:2} \\
        4 & = \sum a_i c_i + 6\sum b_i d_i,\label{eq:6s1:3}\\
        0 & = \sum a_i d_i + \sum b_i c_i,\label{eq:6s1:4}
        \end{align}
      with the same congruence conditions $a_i\equiv c_i$ and $b_i\equiv d_i \pmod{2}$.
    \end{proof}

    %%%%%%%%%%%%%%%%%%%%%%%%%%%%%%%%%%%%%%%%%%%%%%%%%%%%%%%%%%%%%%%%%%%%%%%%%%%%%%%%%%%%%%%%%%%%%%%%%%%%%%%%%%%%%%%%%%%

    \begin{proposition}\label{pr:m6s2}
      Let $K=\K{6}{s}$ for $s \equiv 2 \pmod{4}$, $s\neq 14$. Then $\P(\O_K)\geq 6$.

      In particular, the following element has length $6$:
        \[
        \alpha = \begin{cases}
          1^2 + 1^2 + 1^2 + (1+\sqrt{6})^2 + \Bigl(\frac{\sqrt{6}+\sqrt{s}}{2}\Bigr)^2 + \Bigl(1+\frac{\sqrt{6}+\sqrt{s}}{2}\Bigr)^2 & \text{ if } s\geq 22,\\
          1^2 + 1^2 + 1^2 + (2+\sqrt{6})^2 + \Bigl(\frac{\sqrt{6}+\sqrt{10}}{2}\Bigr)^2 + \Bigl(1+\frac{\sqrt{6}+\sqrt{10}}{2}\Bigr)^2 & \text{ if } s=10.
        \end{cases}
        \]
    \end{proposition}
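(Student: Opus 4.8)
The plan is to run the same kind of coefficient‑comparison argument as in Proposition~\ref{pr:m7s2}, but two features of $m=6$ demand extra care. First, since $6\equiv 2\pmod 4$ (whereas $7\equiv 3\pmod 4$), the field $\K6s$ is no longer of a single type. Writing $s=2s'$ with $s'$ odd (recall $\gcd(6,s)=2$, so $t=\tfrac{3s}{2}=3s'$), one checks that $t\equiv 3\pmod 4$ when $s\equiv 2\pmod 8$, giving type (B1), and $t\equiv 1\pmod 4$ when $s\equiv 6\pmod 8$, giving type (B2); both occur for admissible $s\ge 22$. To treat them simultaneously I would work in the representation
\[
x_i=\frac{a_i+b_i\sqrt 6+c_i\sqrt s+d_i\sqrt t}{2},\qquad t=\tfrac{3s}{2},
\]
and note that in \emph{both} types $\O_K$ is contained in the set of such elements subject to the single pair of congruences $a_i\equiv d_i$ and $b_i\equiv c_i\pmod 2$ (type (B1) merely forces the stronger $a_i\equiv d_i\equiv 0$). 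Since ruling out a representation of $\alpha$ by five elements of this larger set is a stronger statement, it suffices to argue under these relaxed parities.

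Expanding $\alpha=(14+\tfrac s2)+3\sqrt6+\sqrt s+\sqrt{6s}$ and using $\sqrt{6s}=2\sqrt t$, $\sqrt{6t}=3\sqrt s$, $\sqrt{st}=\tfrac s2\sqrt6$, comparison of coefficients yields a system of the same shape as \eqref{eq:7s2:1}--\eqref{eq:7s2:4}:
\[
56+2s=\sum a_i^2+6\sum b_i^2+s\sum c_i^2+\tfrac{3s}{2}\sum d_i^2,
\]
together with $12=2\sum a_ib_i+s\sum c_id_i$, $4=2\sum a_ic_i+6\sum b_id_i$ and $8=2\sum a_id_i+4\sum b_ic_i$.

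The second, and genuinely harder, feature is that here $t=\tfrac{3s}{2}$ is only \emph{linear} in $s$ (contrast $t=7s$ for $m=7$), so the $\sum d_i^2$ term no longer dominates the constant and one cannot conclude $d=(\zeros)$ outright; a single odd $d_i$ — which is exactly what the type (B2) basis vector $\tfrac{1+\sqrt t}{2}$ produces — costs only $\tfrac{3s}{2}$. The first equation gives merely $\sum c_i^2+\tfrac32\sum d_i^2\le 2+\tfrac{56}{s}$, which is decisive only once $s$ is large. I would therefore split the range: the fields with $22\le s\le 110$ are precisely those covered by Lemma~\ref{lem:m67pc}(iii), and $s=10$ by Lemma~\ref{lem:m67pc}(ii), so these are settled computationally; for the smallest admissible value above $110$, namely $s\ge 118$, the bound gives $\sum c_i^2+\tfrac32\sum d_i^2<\tfrac52$, leaving only $(\sum c_i^2,\sum d_i^2)\in\{(0,0),(1,0),(2,0),(0,1)\}$.

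The endgame eliminates every configuration but the wanted one. The case $(0,0)$ contradicts $4=2\sum a_ic_i+6\sum b_id_i$; the case $(0,1)$ forces all $b_i$ even and reduces that equation to $4=\pm 6b_j$, impossible; and $(1,0)$ dies on parity, since $8=4\sum b_ic_i$ would make the one relevant $b_k$ even while $b_k\equiv c_k$ is odd. This leaves $d=(\zeros)$ and $\sum c_i^2=2$, at which point the relaxed parities collapse to $a_i$ even and $b_i\equiv c_i$, exactly the type~(B1) normalization, and the analysis of \eqref{eq:7s2:1}--\eqref{eq:7s2:4} transfers almost verbatim: one obtains $c=(1,1,\zeros)$, then $\sum a_i^2+6\sum b_i^2=56$ together with $\sum a_ib_i=6$ forces $b=(1,1,2,\zeros)$ and finally $a=(2,0,2,2,2,2,\zeros)$, so the only representation is the displayed six‑square one and $\ell(\alpha)=6$. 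The main obstacle is thus the interaction of the (B1)/(B2) dichotomy with the small size of $t$; I expect the only delicate bookkeeping to be the sign ambiguities in $c$ (the choice $c_2=-1$ merely flips a square) and confirming that no admissible $s\equiv 2\pmod 4$ lies strictly between $110$ and $118$, so that the computational and arithmetic ranges cover all of $s\ge 22$.
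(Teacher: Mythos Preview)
Your approach is correct and essentially the same as the paper's: you write $s=2z$ (implicitly), derive the identical system of four equations, invoke Lemma~\ref{lem:m67pc} for $s\le 110$ and $s=10$, and for $s\ge 118$ eliminate all $(\sum c_i^2,\sum d_i^2)$ configurations except $(2,0)$ before finishing with $b=(1,1,2,\zeros)$ and $a=(2,0,2,2,2,2,\zeros)$. The only difference is organizational---the paper treats type~(B1) first and then observes that in type~(B2) the extra possibility $\sum d_i^2=1$ dies quickly, whereas you work under the weaker (B2) parities from the outset; your endgame reference would more aptly point to Proposition~\ref{pr:m7s3} than \ref{pr:m7s2}, since that is where $b$ has odd leading entries.
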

           
    \begin{proof}
      This particular case is the only one when $m\in\{6,7\}$ and $s$ is not coprime with $m$. In fact, $\gcd(6,s)=2$. For this proof, we will slightly change the notation. Let $z\equiv 1\pmod{2}$ satisfy $s=2z$, then $t=3z$. As Lemma~\ref{lem:m67pc} takes care of $s\leq 110$ (and $s$ is not divisible by $3$), we may assume in the following that $s\geq 118$ (hence $z\geq 59$).

      Moreover, we have to distinguish two cases for $z\bmod 4$, as that influences the value of $t=3z$ modulo $4$ and, consequently, the integral basis of $\K{6}{s}$.

      \begin{enumerate}[label= (\alph*)]
        \item $z\equiv 1 \pmod{4}$: 
        Suppose that $\alpha=\sum x_i^2$ for $x_i\in \O_K$. As $K$ is of type (B1) and the integral basis is $\Bigl(1,\sqrt{6}, \sqrt{3z}, \frac{\sqrt{6}+\sqrt{2z}}{2}\Bigr)$, we may write
          \[
          \alpha = (14 + z) + 3\sqrt{6} + \sqrt{2z} + 2\sqrt{3z} = \sum x_i^2 = \sum\Bigl(\frac{a_i + b_i\sqrt{6} + c_i\sqrt{2z} + d_i\sqrt{3z}}{2}\Bigr)^2,
          \]
        where all $a_i\equiv d_i\equiv 0$ and $b_i\equiv c_i \pmod{2}$.

        By expanding the squares and comparing coefficients, one gets the following conditions:
          \begin{align}
          56+4z & = \sum a_i^2 + 6\sum b_i^2 + 2z\sum c_i^2 + 3z\sum d_i^2,\label{eq:6s2:1}\\
          6 & = \sum a_i b_i + z\sum c_i d_i,\label{eq:6s2:2} \\
          2 & = \sum a_i c_i + 3\sum b_i d_i,\label{eq:6s2:3}\\
          4 & = \sum a_i d_i + 2\sum b_i c_i.\label{eq:6s2:4}
          \end{align}

        If we assume nonzero $\sum d_i^2$, then it has to be at least equal to $4$ and~\eqref{eq:6s2:1} leads to a contradiction. Hence $d=(\zeros)$. From~\eqref{eq:6s2:2},~\eqref{eq:6s2:3} and~\eqref{eq:6s2:4}, one immediately sees that none of $a,b,c$ can be $(\zeros)$, so $\sum a_i^2 \geq 4$ and $\sum b_i^2, \sum c_i^2 \geq 1$. From~\eqref{eq:6s2:1} we then get that $\sum c_i^2\leq 2$. As the assumption that $c=(1,\zeros)$ leads together with $b_i\equiv c_i\pmod{2}$ to a contradiction in~\eqref{eq:6s2:4}, we arrive at
          \[
          d=(\zeros), c=(1,1,\zeros), b=(\text{odd},\text{odd},\evens), a=(\evens).
          \]

        We have from~\eqref{eq:6s2:1} that $56 = \sum a_i^2 + 6\sum b_i^2$, which leads to all $|b_i| \leq 2$. Then~\eqref{eq:6s2:4} yields $b_1=b_2=1$. Either all other $b_i=0$, or there is exactly one more nonzero $b_j$, with $|b_j|=2$. However, if we allowed $b=(1,1,\zeros)$, then~\eqref{eq:6s2:2} and~\eqref{eq:6s2:3} would contradict each other. Thus we arrive at
          \[
          d=(\zeros), c=(1,1,\zeros), b=(1,1,2,\zeros), a=(\evens)
          \]
        and it remains to deal with $a$, satisfying $\sum a_i^2=20,\ a_1+a_2+2a_3=6,\ a_1+a_2=2$. This necessarily leads to $a=(2,0,2,2,2,2,\zeros)$. After rewriting back $2z=s$, we get
          \[
          \alpha = \Bigl(1+\frac{\sqrt{6}+\sqrt{s}}{2}\Bigr)^2 + \Bigl(\frac{\sqrt{6}+\sqrt{s}}{2}\Bigr)^2 + (1+\sqrt{6})^2 + 1^2 + 1^2 + 1^2
          \]
        and $\alpha$ is indeed a sum of $6$ squares in $\O_K$, but not less.

        \item $z\equiv 3 \pmod{4}$: $K$ is now of type (B2) and the integral basis is $\Bigl(1,\sqrt{6}, \frac{1+\sqrt{3z}}{2}, \frac{\sqrt{6}+\sqrt{2z}}{2}\Bigr)$. Our $\alpha$ stays the same and, in fact, we get the same system of equations (\ref{eq:6s2:1},\ref{eq:6s2:2},\ref{eq:6s2:3},\ref{eq:6s2:4}) as in the previous case. Only the congruence conditions on $a_i, b_i, c_i, d_i$ are different (weaker). We now have that all $a_i\equiv d_i$ and $b_i\equiv c_i \pmod{2}$.
        
        $z\geq 59$ together with~\eqref{eq:6s2:1} implies that $\sum d_i^2\leq 1$. Suppose that $d=(1,\zeros)$. First observe that necessarily $c=(\zeros)$, because $\sum c_i^2\geq 1$ together with $\sum d_i^2 = 1$ would again lead to a contradiction in~\eqref{eq:6s2:1}. But $d=(1,\zeros)$ and $c=(\zeros)$ leads to a contradiction in~\eqref{eq:6s2:3}. Therefore, $d=(\zeros)$, $a=(\evens)$ and we can finish the proof by following exactly the same steps as in the case (a), since the congruence conditions became the same.\qedhere
      \end{enumerate}
    \end{proof}

    %%%%%%%%%%%%%%%%%%%%%%%%%%%%%%%%%%%%%%%%%%%%%%%%%%%%%%%%%%%%%%%%%%%%%%%%%%%

    \begin{proposition}\label{pr:m6s3}
      Let $K=\K{6}{s}$ for $s \equiv 3 \pmod{4}$. Then $\P(\O_K)\geq 6$.

      In particular, the following element has length $6$:
        \[
        \alpha = 1^2 + 1^2 + 1^2 + (1+\sqrt{6})^2 + \Bigl(\frac{\sqrt{6}+\sqrt{6s}}{2}\Bigr)^2 + \Bigl(1+\frac{\sqrt{6}+\sqrt{6s}}{2}\Bigr)^2.
        \]
    \end{proposition}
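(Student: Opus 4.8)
The plan is to follow the template of Propositions~\ref{pr:m7s1}--\ref{pr:m6s2}: assume $\alpha=\sum x_i^2$ with $x_i\in\O_K$, translate this into a system of Diophantine equations in integral-basis coordinates, and then whittle the solution set down to a single representation. Since $s\equiv 3\pmod 4$ and $\gcd(6,s)=1$, the three squarefree values $6,s,6s$ reduce to $2,3,2\pmod 4$, so $K$ is of type (B1) with integral basis $\bigl(1,\sqrt{6},\sqrt{s},\frac{\sqrt{6}+\sqrt{6s}}{2}\bigr)$; writing $x_i=\tfrac{a_i+b_i\sqrt{6}+c_i\sqrt{s}+d_i\sqrt{6s}}{2}$, the admissible coordinates obey $a_i\equiv c_i\equiv 0$ and $b_i\equiv d_i\pmod 2$. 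Expanding $\alpha=(14+3s)+3\sqrt{6}+6\sqrt{s}+\sqrt{6s}$ and comparing coefficients gives
\begin{align*}
56+12s &= \sum a_i^2 + 6\sum b_i^2 + s\sum c_i^2 + 6s\sum d_i^2,\\
6 &= \sum a_i b_i + s\sum c_i d_i,\\
12 &= \sum a_i c_i + 6\sum b_i d_i,\\
2 &= \sum a_i d_i + \sum b_i c_i.
\end{align*}
The cases $s\in\{7,11\}$ are already settled by Lemma~\ref{lem:m67pc}(iv), so I may assume $s\ge 19$.

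First I would pin down $\sum d_i^2$. The first equation and $s\ge 19$ give $6s\sum d_i^2\le 56+12s$, whence $\sum d_i^2\le 2$. If $d=(\zeros)$ then every $b_i$ is even; since every $c_i$ is also even, the last equation reads $2=\sum b_i c_i\equiv 0\pmod 4$, a contradiction. If $\sum d_i^2=1$, say $d=(1,\zeros)$, then $b_1$ is odd, while $\sum a_ic_i\equiv 0\pmod 4$; reducing the third equation modulo $4$ yields $0\equiv 6b_1\equiv 2b_1\pmod 4$, forcing $b_1$ even -- impossible. Hence $\sum d_i^2=2$.

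The decisive new feature, compared with the $m=7$ cases, is that $\alpha$ carries the large coefficient $6$ in front of $\sqrt{s}$, and this forces $c=(\zeros)$ instead of two odd entries. Indeed, with $\sum d_i^2=2$ the first equation becomes $56=\sum a_i^2+6\sum b_i^2+s\sum c_i^2$; as each $c_i$ is even, a single nonzero $c_i$ would give $\sum c_i^2\ge 4$, hence $s\sum c_i^2\ge 4s>56$, a contradiction. So $c=(\zeros)$, and I may now normalise $d=(1,1,\zeros)$ (legitimate since, with $c=(\zeros)$, a sign flip of a summand only touches its $a_i,b_i,d_i$), so that $b_1,b_2$ are odd and the remaining $b_i$ even. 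The system then collapses to $56=\sum a_i^2+6\sum b_i^2$, $6=\sum a_i b_i$, $b_1+b_2=2$ and $a_1+a_2=2$. From $6\sum b_i^2\le 56$ and $b_1+b_2=2$ with $b_1,b_2$ odd I get $b_1^2+b_2^2=2$, so either $b=(1,1,\zeros)$ -- which is excluded because it forces $\sum a_ib_i=a_1+a_2=6$ against $a_1+a_2=2$ -- or there is exactly one further entry $b_3=\pm 2$. Thus $b=(1,1,2,\zeros)$, and $\sum a_i b_i=a_1+a_2+2a_3=6$ gives $a_3=2$; together with $a_1+a_2=2$ and $\sum a_i^2=20$ this forces $a=(2,0,2,2,2,2,\zeros)$. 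Reading off the coordinates recovers exactly the displayed $\alpha$, so $\ell(\alpha)=6$.

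I expect the only genuine obstacle to be the bookkeeping: carrying out the modular eliminations of $\sum d_i^2\in\{0,1\}$ cleanly and checking that each sign/reordering normalisation is justified by the symmetries of Convention~\ref{con:wlog}. The pivotal step $c=(\zeros)$ is very short once the system is assembled, and the concluding analysis of $b$ and $a$ is entirely parallel to the earlier propositions, so I anticipate no new difficulty there.
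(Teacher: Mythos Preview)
Your proof is correct and follows essentially the same route as the paper's: the same integral basis, the same Diophantine system, the same elimination of $\sum d_i^2\in\{0,1\}$ via parity, then $c=(\zeros)$ from the size bound, and the identical endgame for $b$ and $a$. The only cosmetic differences are that you rule out $d=(\zeros)$ via equation~(4) rather than~(2), and you phrase the $c=(\zeros)$ step using only $\sum d_i^2=2$ before normalising $d$; both are harmless variations.
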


    \begin{proof}
      Thanks to Convention~\ref{con:mst} and Lemma~\ref{lem:m67pc}, we may assume $s\geq 19$. Suppose that $\alpha=\sum x_i^2$ for $x_i\in \O_K$. As $K$ is of type (B1) and the integral basis is $\Bigl(1,\sqrt{6}, \sqrt{s}, \frac{\sqrt{6}+\sqrt{6s}}{2}\Bigr)$, we may write
        \[
        \alpha = (14 + 3s) + 3\sqrt{6} + 6\sqrt{s} + 6\sqrt{6s} = \sum x_i^2 = \sum\Bigl(\frac{a_i + b_i\sqrt{6} + c_i\sqrt{s} + d_i\sqrt{6s}}{2}\Bigr)^2,
        \]
      where all $a_i\equiv c_i\equiv 0$ and $b_i\equiv d_i \pmod{2}$.

      By expanding the squares and comparing coefficients, one gets the following conditions:
        \begin{align}
        56+12s & = \sum a_i^2 + 6\sum b_i^2 + s\sum c_i^2 + 6s\sum d_i^2,\label{eq:6s3:1}\\
        6 & = \sum a_i b_i + s\sum c_i d_i,\label{eq:6s3:2} \\
        12 & = \sum a_i c_i + 6\sum b_i d_i,\label{eq:6s3:3}\\
        2 & = \sum a_i d_i + \sum b_i c_i.\label{eq:6s3:4}
        \end{align}

      From~\eqref{eq:6s3:1}, we obtain that $\sum d_i^2 \leq 2$. Let us assume that either $d=(\zeros)$ or $d=(1,\zeros)$. If we take into account that $b_i\equiv d_i\pmod{2}$ and look at equations~\eqref{eq:6s3:2} or~\eqref{eq:6s3:3} modulo $4$, respectively, we will get a contradiction. Therefore, $d=(1,1,\zeros)$ and $b=(\text{odd},\text{odd},\evens)$.

      If $c\neq(\zeros)$, then $\sum c_i^2\geq 4$ and~\eqref{eq:6s3:1} leads to a contradiction. Therefore we arrive at
        \[
        d=(1,1,\zeros), c=(\zeros), b=(\text{odd},\text{odd},\evens), a=(\evens).
        \]

      We have from~\eqref{eq:6s3:1} that $56 = \sum a_i^2 + 6\sum b_i^2$, which together with $\sum a_i^2\equiv 0\pmod{4}$ leads to $\sum b_i^2\leq 8$ and all $|b_i| \leq 2$. Then~\eqref{eq:6s3:3} gives us $b_1=b_2=1$. Either all other $b_i=0$, or there is exactly one more nonzero, with $|b_j|=2$. However, if $b=(1,1,\zeros)$, then~\eqref{eq:6s3:2} and~\eqref{eq:6s3:4} would contradict each other. Thus we arrive at
        \[
        d=(1,1,\zeros), c=(\zeros), b=(1,1,2,\zeros), a=(\evens)
        \]
      and the remainder of the proof is similar to the previous ones. As $a$ has to satisfy $\sum a_i^2=20,\ a_1+a_2+2a_3=6,\ a_1+a_2=2$, this necessarily leads to $a=(2,0,2,2,2,2,\zeros)$.  Hence we get
        \[
        \alpha = \Bigl(1+\frac{\sqrt{6}+\sqrt{6s}}{2}\Bigr)^2 + \Bigl(\frac{\sqrt{6}+\sqrt{6s}}{2}\Bigr)^2 + (1+\sqrt{6})^2 + 1^2 + 1^2 + 1^2
        \]
      and $\alpha$ is indeed a sum of $6$ squares in $\O_K$, but not less.
    \end{proof}
  
  %%%%%%%%%%%%%%%%%%%%%%%%%%%%%%%%%%%%%%%%%%%%%%%%%%%%%%%%%%%%%%%%%%%%%%%%%%%%%%%%%%%%%%%%%%%%%%%%%%%%%%%%%%%%%%%%%%%

  \subsection{Fields with $m=3$}\label{subsec:m3}
    
    For the fields of the form $K=\K{3}{s}$, we failed to prove $\P(\O_K)\geq 6$ in general. However, even in this case we managed to gather a lot of evidence towards the validity of Conjecture~\ref{conj:final}. This is contained in Propositions~\ref{pr:m3s1}, \ref{pr:m3s2} and \ref{pr:m3s3}, which are based on cleverly guessing three families of potential witnesses of length $6$ and verifying them on a computer for (finitely) many fields. By this we extend the results of~\cite{KraRasSga22} where the authors dealt with fields with $s\in\{17,22\}$, $26 \leq s \leq 55$ and $55 < s \leq 79$ odd.

    Before presenting these partial results, let us discuss why this case is so different from the previous ones (with $m\in\{6,7\}$). A significant part of the reason lies in the fact that $\P(\O_{\Q(\sqrt{3})})=3$ while $\P(\O_{\Q(\sqrt{6})})=\P(\O_{\Q(\sqrt{7})})=4$.
    
    Namely, for $m\in\{6,7\}$, we followed the usual strategy of choosing
      \[
      \alpha=\alpha_0+u^2+v^2,
      \]
    where $\ell(\alpha_0)=4$ in $\O_{\Q(\!\sqrt{m})}$. This gave us $\alpha\in\O_K$ with $\ell(\alpha)=6$, as we proved in Propositions~\ref{pr:m7s1}--\ref{pr:m6s3}. In case $m=3$, it is similarly easy to choose $\alpha_0$ with $\ell(\alpha_0)=3$ in $\O_{\Q(\!\sqrt{3})}$ and add two more squares to obtain $\alpha\in\O_K$ with $\ell(\alpha)=5$, see~\cite[Section 6.5]{KraRasSga22}. But in order to get $\ell(\alpha)=6$, we need one more square, and that is the point where things get challenging. In particular, it proved surprisingly difficult to obtain an element with $\ell(\alpha)=6$ from the known elements of length $5$ by just adding one more square. Nevertheless, we managed to find several families of elements (not of the form $\alpha_0+u^2+v^2+w^2$ with $\alpha_0\in\O_{\Q(\!\sqrt{3})}$) which we verified to indeed have length $6$ for multiple $\K{3}{s}$ (with rather small $s$). %We believe that these (or similar) families could be used in the future to eventually prove that at least there exist infinitely many fields containing $\sqrt{3}$ which have $\P(\O_K) \geq 6$. In this paper we only provide the computational results and our conjecture.
    
    In the following, we explicitly write down only those elements of length $6$ which belong to one of these families, while omitting the few ones which are of the different form and only ``fill the gaps''. All these families have quite a complicated form which makes use of the ``odd floor'' and ``odd ceiling'' functions, denoted here by $\flfl{x}$ and $\cece{x}$, assigning to each real $x$ the largest odd integer $n \leq x$ and the smallest odd integer $n \geq x$, respectively. One could see a certain similarity to one known family of elements with $\ell(\alpha)=5$ in $\K{5}{s}$, see \cite[Section 7.1]{KraRasSga22}.\footnote{The section title there, ``Lower bound in the most difficult case'', nicely illustrates the challenge involved.}

    \begin{remark}
      Conjecture~\ref{conj:final} predicts that $\P(\O_K)\geq 6$ for $K=\K{3}{s}$, $s>14$. We hope that the families presented below (or similar ones) may indeed be used to prove this (or at least the weaker statement that $\P(\O_K)\geq 6$ for infinitely many $K=\K{3}{s}$). 
      
      The upper bounds for $s$ in the following Propositions are due to the necessary computation time. The computations needed for the verification of family in Proposition~\ref{pr:m3s1} turned out to be exceptionally fast, which allowed us to handle much higher values of $s$ than in Propositions~\ref{pr:m3s2} and \ref{pr:m3s3}. Note that for several odd numbers, namely $315$, $999$, $3161$ and $3163$, we covered all fields $\K{3}{s}$ with this value of $\flflsq{s}$.\footnote{These values were chosen so that the corresponding intervals for $s$ contain $10^5$, $10^6$ and $10^7$.} This indicates that Proposition~\ref{pr:m3s1} possibly holds for all $s\equiv 1\pmod{4}, s\geq 17$. 
      
      On the other hand, the family from Proposition~\ref{pr:m3s2} is much weaker -- it happens for several odd numbers $n$ that among all fields with $n^2 < s < (n+2)^2$ there is at least one exception with $\ell(\alpha) < 6$.
    \end{remark}

    \begin{proposition}\label{pr:m3s1}
      Let $K=\K{3}{s}$ for $s \equiv 1 \pmod{4}$. If $17 \leq s < 15000$, $315^2 < s < 317^2$, $999^2 < s < 1001^2$ or $3161^2 < s < 3165^2$, then $\P(\O_K)\geq 6$.

      In particular:
      \begin{itemize}
        \item If $s=29$, then there exists $\alpha\in\O_K$ with $\ell(\alpha)=6$ and $\Tr(\alpha)=314$.
        \item For all remaining $s$, the following $\alpha$ has $\ell(\alpha)=6$:        
          \begin{align*}
          \alpha = &\ 1^2 + \Bigl(\frac{\flflsq{s}+\sqrt{s}}{2}\Bigr)^2 + \Bigl(\frac{\flflsq{s}+\sqrt{s}}{2}\Bigr)^2 + \Bigl(\frac{\cecesq{s}+\sqrt{s}}{2}\Bigr)^2 + \\
          & + \Bigl(\frac{(1+\sqrt{3})\cdot(\cecesq{s}+\sqrt{s})}{2}\Bigr)^2 + \Bigl(1 - \frac{\sqrt{3}\cdot(\flflsq{s}+\sqrt{s})}{2}\Bigr)^2.
          \end{align*}          
      \end{itemize}
    \end{proposition}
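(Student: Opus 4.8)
The two assertions to prove are that the displayed $\alpha$ (respectively the trace-$314$ element when $s=29$) satisfies $\ell(\alpha)=6$, and that consequently $\P(\O_K)\ge 6$. The latter is immediate from the definition of the Pythagoras number as a supremum of lengths, so the whole content is the claim $\ell(\alpha)=6$, which I would split into the routine bound $\ell(\alpha)\le 6$ and the substantive bound $\ell(\alpha)\ge 6$.

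For the upper bound, first I would record that, with $p=3$, $q=s\equiv 1\pmod 4$ and $r=3s$ (note $3\nmid s$ under Convention~\ref{con:mst}), the field $\K{3}{s}$ is of type (B3), with integral basis $\bigl(1,\sqrt3,\tfrac{1+\sqrt s}{2},\tfrac{\sqrt3+\sqrt{3s}}{2}\bigr)$; hence $\tfrac{a+b\sqrt3+c\sqrt s+d\sqrt{3s}}{2}$ lies in $\O_K$ exactly when $a\equiv c$ and $b\equiv d\pmod 2$. Expanding the six displayed summands in these coordinates and using that $\flflsq s$ and $\cecesq s$ are odd by definition, one checks that each summand meets these parity conditions (this is precisely the role of the odd floor and ceiling). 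Thus $\alpha$ is visibly a sum of six squares from $\O_K$ and $\ell(\alpha)\le 6$.

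For the lower bound I would reduce $\ell(\alpha)\ge 6$ to the finite problem of showing that $\alpha$ is not a sum of five squares. The key finiteness input is positive-definiteness of the trace form on $\O_K\cong\Z^4$: for $x=\tfrac{a+b\sqrt3+c\sqrt s+d\sqrt{3s}}{2}$ one computes $\Tr(x^2)=a^2+3b^2+sc^2+3sd^2$. If $\alpha=\sum_{i=1}^{5}x_i^2$, then in every real embedding $\sigma$ we have $0\le\sigma(x_i)^2\le\sigma(\alpha)$, so $\Tr(x_i^2)\le\Tr(\alpha)=O(s)$; this confines every possible summand $x_i$ to the lattice points of a fixed ellipsoid (explicitly $sc^2,\,3sd^2\le\Tr(\alpha)$ forces $c,d$ small, while $a,b$ range only up to $O(\sqrt s)$), of which there are finitely many. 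Enumerating these candidates and verifying that no five of their squares sum to $\alpha$ then establishes $\ell(\alpha)\ge 6$.

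This finite check I would run by computer for each admissible $s$, using the tools described in the introduction (\texttt{IsRepresented} in Magma~\cite{BosCanPla97}, cross-checked in Mathematica); the anomalous value $s=29$, for which the family member is itself a sum of five squares, is handled by substituting the separate trace-$314$ witness and running the same test. The obstacle here is computational rather than conceptual: the number of lattice points in $\{x:\Tr(x^2)\le\Tr(\alpha)\}$ grows roughly linearly in $s$, so the cost of the five-square search increases with $s$, which is exactly why the ranges for $s$ must be capped (and why the unusually fast behaviour of this family, compared with Propositions~\ref{pr:m3s2}--\ref{pr:m3s3}, lets us push $s$ as far as $3165^2$). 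The genuinely hard step---already accomplished in the statement---was guessing a family whose members uniformly survive this test; a uniform hand-proof in the style of Propositions~\ref{pr:m7s1}--\ref{pr:m6s3} appears out of reach, since the right-hand sides of the resulting Diophantine system depend on $s$ through $\flflsq s$ and $\cecesq s$ and resist a single closed-form analysis.
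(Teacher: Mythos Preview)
Your proposal is correct and matches the paper's approach: Proposition~\ref{pr:m3s1} is stated without a separate proof, as the surrounding text and the earlier Remark make clear that it is established by computer verification (Magma's \texttt{IsRepresented}, cross-checked in Mathematica) of the finite search you describe. Your write-up actually fleshes out details the paper leaves implicit---the type-(B3) parity check confirming each summand lies in $\O_K$, and the trace-form bound guaranteeing finiteness of the five-square search---so it is a faithful and slightly more explicit account of the same argument.
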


    \begin{proposition}\label{pr:m3s2}
      Let $K=\K{3}{s}$ for $s \equiv 2 \pmod{4}$. If $22 \leq s \leq 506$ and does not belong to the set $\{170,178,230,238,362,442,446,454\}$, then $\P(\O_K) \geq 6$. 

      In particular:
      \begin{itemize}
        \item If $22 \leq s \leq 46$, then there exists $\alpha\in\O_K$ with $\ell(\alpha)=6$ and $\Tr(\alpha)<1000$.
        \item If $s\in\{62,70,74\}$, then there exists $\alpha\in\O_K$ with $\ell(\alpha)=6$ and $\Tr(\alpha)$ equal to $1260$, $1388$ and $1512$, respectively.
        % the following $\alpha$ has length $6$:
        % \begin{align*}
        %   \alpha = &\ (2+\sqrt{3})^2 + (2+\sqrt{3})^2 + \Bigl(\frac{2-2\sqrt{3}-\sqrt{s}-\sqrt{3s}}{2}\Bigr)^2 + \Bigl(\frac{6-\sqrt{s}-\sqrt{3s}}{2}\Bigr)^2 + \\
        %   & + \Bigl(\frac{4\sqrt{3}+\sqrt{s}+\sqrt{3s}}{2}\Bigr)^2 + \Bigl(\frac{8-4\sqrt{3}+2\sqrt{s}}{2}\Bigr)^2.
        %   \end{align*}
        % \item If $s=74$, then the following $\alpha$ has length $6$:        
        %   \begin{align*}
        %   \alpha = &\ (2+\sqrt{3})^2 + (2+\sqrt{3})^2 + \Bigl(\frac{2-2\sqrt{3}-\sqrt{s}-\sqrt{3s}}{2}\Bigr)^2 + \Bigl(\frac{6-\sqrt{s}-\sqrt{3s}}{2}\Bigr)^2 + \\
        %   & + \Bigl(\frac{8+2\sqrt{3}-\sqrt{s}-\sqrt{3s}}{2}\Bigr)^2 + \Bigl(\frac{6-6\sqrt{3}+2\sqrt{s}}{2}\Bigr)^2.
        %   \end{align*}

        \item For all remaining $s$, the following $\alpha$ has length $6$:        
          \begin{align*}
          \alpha = &\ (2+\sqrt{3})^2 + (2+\sqrt{3})^2 + \Bigl(\frac{(\flflsq{s}-5)-2\sqrt{3}-\sqrt{s}-\sqrt{3s}}{2}\Bigr)^2 + \Bigl(\frac{(\flflsq{s}-3)-\sqrt{s}-\sqrt{3s}}{2}\Bigr)^2 + \\
          & + \Bigl(\frac{(\flflsq{s}-9)-4\sqrt{3}-\sqrt{s}-\sqrt{3s}}{2}\Bigr)^2 + \Bigl(\frac{(\flflsq{s}+1)+\sqrt{3}\cdot(3-\flflsq{s})+2\sqrt{s}}{2}\Bigr)^2.
          \end{align*}
      \end{itemize}
    \end{proposition}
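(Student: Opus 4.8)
The plan is to follow the explicit-witness strategy of Propositions~\ref{pr:m7s1}--\ref{pr:m6s3}: for each admissible $s$ produce an explicit $\alpha\in\O_K$, obtain $\ell(\alpha)\le 6$ by exhibiting six squares, and establish $\ell(\alpha)\ge 6$ by excluding every representation of $\alpha$ as a sum of five squares. The essential difference from the cases $m\in\{6,7\}$ is that the final exclusion cannot be closed uniformly in $s$, so it is verified on a computer (cf.\ the tools listed after Theorem~\ref{thm:state} and Remark~\ref{rem:support}) for the finitely many $s$ in range.

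First I would fix coordinates. Since $3$ is prime, Convention~\ref{con:mst} forces $3\nmid s$ and $t=3s$; and since $(s,3)\equiv(2,3)\pmod 4$ the field is of type (B1) with integral basis $\bigl(1,\sqrt{s},\sqrt{3},\tfrac{\sqrt{s}+\sqrt{3s}}{2}\bigr)$. Writing each summand as $\tfrac{a_i+b_i\sqrt 3+c_i\sqrt s+d_i\sqrt{3s}}{2}$, membership in $\O_K$ is exactly $a_i\equiv b_i\equiv 0$ and $c_i\equiv d_i\pmod 2$. Reading off the coordinates of the six displayed summands (for instance the third summand is $(a,b,c,d)=(\flflsq{s}-5,-2,-1,-1)$) verifies that they lie in $\O_K$, giving $\ell(\alpha)\le 6$; a short computation then yields $\Tr(\alpha)=7\flflsq{s}^{\,2}-50\flflsq{s}+259+16s$.

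For the lower bound I would assume $\alpha=\sum_{i=1}^{5}x_i^2$ and compare coefficients of $1,\sqrt 3,\sqrt s,\sqrt{3s}$, producing four Diophantine equations in the $a_i,b_i,c_i,d_i$ exactly as in the earlier proofs. The trace identity $\Tr(\alpha)=\sum_i\bigl(a_i^2+3b_i^2+s\,c_i^2+3s\,d_i^2\bigr)$ confines all coordinates to a finite box (each nonzero $c_i$ or $d_i$ already costs at least $s$), and total positivity, $\sigma(x_i)^2\le\sigma(\alpha)$ for every real embedding $\sigma$, prunes this further; the remaining finitely many tuples are then tested against the other three equations. Lacking a uniform contradiction, this last step is run through Magma's \texttt{IsRepresented} and cross-checked in Mathematica, separately for each $s$.

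The hard part is exactly that $m=3$ is too small to force a clean contradiction: the slack in the $a_i^2+3b_i^2$ part of the trace leaves many near-solutions, which is why the witness is a guessed family rather than one derived from the Diophantine system, and why no argument in the style of Propositions~\ref{pr:m7s1}--\ref{pr:m6s3} is available. Consequently the bound $s\le 506$ and the exceptional set reflect computation time, not genuine failures; and for the small values $22\le s\le 46$ and for $s\in\{62,70,74\}$ the family member happens to drop below length $6$, so these are supplied separately by the ad hoc witnesses of the stated traces.
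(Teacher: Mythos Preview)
Your proposal is correct and follows exactly the route the paper takes: the paper offers no hand proof for this proposition but states (in the introductory remark on computer calculations and in the discussion opening Section~\ref{subsec:m3}) that the claimed witnesses were verified to have length~$6$ using Magma's \texttt{IsRepresented} and an independent Mathematica script, with the small~$s$ handled by ad hoc elements found by Raška's trace-bounded search. Your outline of the verification---checking $x_i\in\O_K$ via the (B1) parity conditions, computing $\Tr(\alpha)=7\flflsq{s}^{\,2}-50\flflsq{s}+259+16s$, and bounding the finite search box for a putative five-square representation---is an accurate expansion of what the paper leaves implicit.
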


    \begin{proposition}\label{pr:m3s3}
      Let $K=\K{3}{s}$ for $s \equiv 3 \pmod{4}$. If $19 \leq s \leq 511$, then $\P(\O_K) \geq 6$. 

      In particular:
      \begin{itemize}
        \item If $19 \leq s \leq 91$, then there exists $\alpha\in\O_K$ with $\ell(\alpha)=6$ and $\Tr(\alpha)<1000$.
        \item If $s=119$,  there exists $\alpha\in\O_K$ with $\ell(\alpha)=6$ and $\Tr(\alpha)=1098$.
        % then the following $\alpha$ has length $6$:
        %   \[
        %   \alpha = 1^2 + 1^2 + \Bigl(\frac{6+3\sqrt{3}+\sqrt{s}}{2}\Bigr)^2 + \Bigl(\frac{8+\sqrt{3}+\sqrt{s}}{2}\Bigr)^2 + \Bigl(\frac{8+3\sqrt{3}+\sqrt{s}}{2}\Bigr)^2 + \Bigl(\frac{3+3\sqrt{3}-\sqrt{s}+\sqrt{3s}}{2}\Bigr)^2.
        %   \]
        \item For all remaining $s$, the following $\alpha$ has length $6$:        
          \begin{align*}
          \alpha = &\ 1^2 + 1^2 + \Bigl(\frac{(\flflsq{s}-5)+3\sqrt{3}+\sqrt{s}}{2}\Bigr)^2 + \Bigl(\frac{(\flflsq{s}-7)+5\sqrt{3}+\sqrt{s}}{2}\Bigr)^2 + \\
          & + \Bigl(\frac{(\flflsq{s}-5)+5\sqrt{3}+\sqrt{s}}{2}\Bigr)^2 + \Bigl(\frac{(22-\flflsq{s})+\sqrt{3}\cdot(\flflsq{s}-12)-\sqrt{s}+\sqrt{3s}}{2}\Bigr)^2.
          \end{align*}
      \end{itemize}
    \end{proposition}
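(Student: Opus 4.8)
The plan is to produce, for each $s\equiv 3\pmod 4$ in the stated range, an explicit $\alpha\in\O_K$ that is manifestly a sum of six squares (so the bound $\ell(\alpha)\leq 6$ comes for free) and then to certify $\ell(\alpha)=6$ by excluding every representation of $\alpha$ as a sum of five squares. First I would fix the integral basis. By Convention~\ref{con:mst}, a squarefree $s\equiv 3\pmod 4$ with $3<s<t$ forces $3\nmid s$ and $t=3s$, so $K$ is of type (B3) with basis $\bigl(1,\sqrt 3,\frac{\sqrt 3+\sqrt s}{2},\frac{1+\sqrt{3s}}{2}\bigr)$, exactly parallel to the companion field in Proposition~\ref{pr:m7s3}. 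Consequently every $x\in\O_K$ has the shape $x=\frac{a+b\sqrt 3+c\sqrt s+d\sqrt{3s}}{2}$ with $a\equiv d$ and $b\equiv c\pmod 2$.

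Second, following the reduction in Convention~\ref{con:wlog}, a putative representation $\alpha=\sum_{i=1}^{5}x_i^2$ expands, upon comparing the coefficients of $1,\sqrt 3,\sqrt s,\sqrt{3s}$, into a system of four Diophantine equations. The first of these is the trace equation $\sum_{i=1}^{5}\bigl(a_i^2+3b_i^2+s\,c_i^2+3s\,d_i^2\bigr)=\Tr(\alpha)$, a fixed nonnegative integer computable from the displayed $\alpha$. Since $K$ is totally real, each summand $x_i^2$ is totally nonnegative and $\Tr(x_i^2)=a_i^2+3b_i^2+s c_i^2+3s d_i^2\geq 0$, so the trace equation bounds all coefficients at once: only the finitely many squares $x^2$ with $\Tr(x^2)\leq\Tr(\alpha)$ can occur as summands. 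Hence the assertion ``$\alpha$ is not a sum of five squares'' is finite and decidable.

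Third -- and here the method genuinely departs from Propositions~\ref{pr:m7s1}--\ref{pr:m6s3} -- the $m=3$ system does \emph{not} collapse into a short chain of parity and size deductions in a single variable, the structural reason being $\P(\O_{\Q(\sqrt 3)})=3$ rather than $4$. I would therefore not attempt a uniform elimination by hand, but instead discharge the finite five-square check computationally for each admissible $s$, using a lattice sum-of-squares routine (the \texttt{IsRepresented} function in Magma, as already invoked for Lemma~\ref{lem:m67pc}), cross-verified by an independent script. The displayed family is a guessed parametric witness whose trace grows only like $\flflsq{s}^{\,2}\sim s$, which keeps the candidate set of summand-squares small enough to enumerate; for the finitely many small or anomalous $s$ (such as $19\le s\le 91$ and $s=119$) where this particular family happens to drop to length $\le 5$, I would substitute separate ad hoc witnesses of comparable bounded trace and certify them the same way.

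I expect the main obstacle to be computational feasibility rather than logic. The number of candidate summand-squares grows with $\Tr(\alpha)\sim s$, so the five-square search becomes expensive for large $s$, which is precisely why the statement is capped at $s\le 511$ instead of covering all $s$. A secondary difficulty is engineering the parametric family so that it provably attains length $6$ throughout the range -- an essentially experimental task, since a naive ``add one square'' modification of the known length-$5$ elements tends to remain at length $5$ -- which is exactly why the family is guessed and why the exceptional small $s$ demand bespoke witnesses.
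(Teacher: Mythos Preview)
Your proposal is correct and matches the paper's approach: Proposition~\ref{pr:m3s3} (like Propositions~\ref{pr:m3s1} and~\ref{pr:m3s2}) is not given a by-hand proof in the paper but is established exactly as you describe---by exhibiting explicit candidate elements (the parametric family for generic $s$, and ad~hoc witnesses found via Raška's bounded-trace search for the exceptional small $s$) and then certifying $\ell(\alpha)=6$ for each by a finite five-square exclusion using Magma's \texttt{IsRepresented}, cross-checked in Mathematica. Your identification of the integral basis, the parity constraints, the trace bound making the search finite, and the reason the range is capped at $s\le 511$ are all accurate.
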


%%%%%%%%%%%%%%%%%%%%%%%%%%%%%%%%%%%%%%%%%%%%%%%%%%%%%%%%%%%%%%%%%%%%%%%%%%%%%%%%%%%%%%%%%%%%%%%%%%%%%%%%%%%%%%%%%%%

\section{Open problems}\label{sec:open}

  In order to fully divide all biquadratic fields into two classes with $\P(\O_K) \leq 5$ and $\P(\O_K) \geq 6$ (and thus prove both the Conjecture~\ref{conj:final} and the weaker Conjecture 1.6(1) from~\cite{KraRasSga22}), the following tasks remain. For one, to find infinite families of elements $\alpha$ in fields $\K{3}{s}$ with $\ell(\alpha)\geq 6$ (we suggest some candidate families in Section~\ref{subsec:m3}). For another, to prove that the six exceptional fields from Conjecture~\ref{conj:final} indeed have $\P(\O_K)\leq 5$.

  Conjecture 1.6(3) from~\cite{KraRasSga22} also remains unresolved. It predicted that $\K{2}{3}$, $\K{2}{5}$, $\K{3}{5}$ have $\P(\O_K)= 3$ and $\K{2}{7}$, $\K{3}{7}$, $\K{5}{6}$, $\K{5}{7}$ have $\P(\O_K)= 4$. The solution for $\K{2}{5}$ has already been announced by Krásenský and Scharlau, but proving the predicted upper bound for the remaining six fields remains an open problem.

  This is connected to the possible improvements of the upper bounds for $\P(\O_K)$. The best general estimate for biquadratic fields is $\P(\O_K)\leq 7$ (see ~\cite{KalYat21}). However, if the conjecture that $g_{\Z[\sqrt{3}]}(2)=6$ from~\cite[Section 1]{KraYat23} was proven, it would immediately lead to $\P(\O_K)\leq 6$ for all $K=\K{3}{s}$.

  A natural next step is to determine which biquadratic fields have $\P(\O_K)=6$ and which have $\P(\O_K)=7$. Some infinite families with $\P(\O_K)=7$ were already found in~\cite{Tin25} and \cite{KraRasSga22}. Interestingly enough, $\K{6}{7}$ and $\K{13}{15}$ have $\P(\O_K)$ equal to $7$ (see~\cite[Lemma 5.2]{KraYat23}). This seems to indicate that $7$ might be the more typical value.

%%%%%%%%%%%%%%%%%%%%%%%%%%%%%%%%%%%%%%%%%%%%%%%%%%%%%%%%%%%%%%%%%%%%%%%%%%%%%%%%%%%%%%%%%%%%%%%%%%%%%%%%%%%%%%%%%%%

\section*{Acknowledgement}

  I would like to thank Jakub Krásenský for introducing me to the topic of this paper in the first place and for several helpful discussions during its early stages. I am also grateful for his insightful comments on an earlier version of the manuscript.

\bibliography{reference}{}
\bibliographystyle{alpha}

\end{document}